\newcommand{\R}{\mathbb{R}}
\newcommand{\algorithmicbreak}{\textbf{break}}
\newcommand{\BREAK}{\STATE \algorithmicbreak}
\newcounter{algnum}
	\renewcommand{\thealgorithm}{
	\arabic{algnum}-(\arabic{algorithm})}
\crefname{hypothesis}{Hypothesis}{Hypotheses}
\newtheorem{problem}{Problem}
\title{The Nearest Graph Laplacian in Frobenius Norm\thanks{Submitted to the editors DATE.
\funding{This work was supported by Japan Society for the Promotion of Science KAKENHI under 23K03899.}}}
\author{Kazuhiro Sato\thanks{Department of Mathematical Informatics, Graduate School of Information Science and Technology, the University of Tokyo,
  (K. Sato: \email{kazuhiro@mist.i.u-tokyo.ac.jp}, M. Suzuki: \email{suzuki-masato602@g.ecc.u-tokyo.ac.jp})}
\and Masato Suzuki\footnotemark[2]}
\begin{document}

\maketitle
\begin{abstract}
We address the problem of finding the nearest graph Laplacian to a given matrix, with the distance measured using the Frobenius norm. Specifically, for the directed graph Laplacian, we propose two novel algorithms by reformulating the problem as convex quadratic optimization problems with a special structure: one based on the active set method and the other on direct computation of Karush-Kuhn-Tucker (KKT) points.
The proposed algorithms can be applied to system identification and model reduction problems involving Laplacian dynamics.
We demonstrate that these algorithms possess lower time complexities and the finite termination property, unlike the interior point method and V-FISTA, the latter of which is an accelerated projected gradient method.
Our numerical experiments confirm the effectiveness of the proposed algorithms.
\end{abstract}

\begin{keywords}
  Convex Quadratic Optimization, Directed Graph, Graph Laplacian, Optimization Algorithm
\end{keywords}

\begin{AMS}
  	05C50, 05C20, 90C20
\end{AMS}

\section{Introduction}
\label{sec: Introduction}
\subsection{Background}
\label{subsec: Background}
The graph Laplacian matrix is a fundamental tool in graph theory and has important applications in various fields \cite{bapat2010graphs, sugiyama2023kron, vishnoi2013lx}. 
Networked systems can be found in many places in the real world, and dynamical behaviors on networks are often modeled with the graph Laplacian dynamics \cite{Mirzaev2013-jp}
\begin{equation}
\label{lap_dynamics}
\dot{x}(t)=-Lx(t),
\end{equation}
where $x(t)\in \mathbb{R}^n$ and $L\in \mathbb{R}^{n\times n}$ denote the state vector at time $t$ and the graph Laplacian, respectively. Examples of (\ref{lap_dynamics}) are seen in social networks \cite{Proskurnikov2017-up, Yi2021-to, Yi2019-iu}, multi-agent systems \cite{bullo2019lectures,Mesbahi2010-tu,Olfati-Saber2007-tk,Ren2007-ms}, biochemical reaction systems \cite{Ahsendorf2014-gz, Estrada2016-hv, Gunawardena2012-sh, Karp2012-yz}, synchronization systems \cite{Ashwin2016-nw,Tuna2016-bk}, and brain networks \cite{abdelnour2018functional, tang2020control}.
An equivalent expression of (\ref{lap_dynamics}) is
\begin{equation*}
\dot{x}_i(t) = \sum_{j}w_{ij}(x_j(t)-x_i(t)),
\end{equation*}
where $w_{ij}=-L_{ij}$ is a non-negative value. The $i$-th state $x_i(t)$ increases or decreases to minimize the difference between the $j$-th state $x_j(t)$, with the reference weight $w_{ij}$. The dynamics reach an equilibrium state with the same value for each variable, which is called a consensus \cite{bullo2019lectures}.

\subsubsection{System identification problem} \label{Intro_identification}

The various applications of graph Laplacians give rise to the system identification problem of such dynamics on graphs; in other words, the problem of identifying the graph structure and the edge weights of the network from observed data. In such situations, we need to construct a graph Laplacian from a noisy matrix that may not necessarily be a graph Laplacian.
In more detail, we can consider
a discretized model system \eqref{lap_dynamics} with noise $\varepsilon_k$, using the Euler method, as described by
\begin{align}
    x_{k+1} = (\mathrm{I}_n-hL)x_k + \varepsilon_k, \label{discrete_equation}
\end{align}
where $h>0$ is the sampling interval.
Using \eqref{discrete_equation}, we can formulate the system identification problem as
\begin{equation}
\begin{aligned}
\min_{L\in \mathbb{R}^{n\times n}} \quad & \frac{1}{N} \|X'-(\mathrm{I}_n-hL)X\|_{\rm F}^2\\
\mathrm{subject\ to}\quad& L
\mathrm{\,\, is\,\,a\,\,graph\,\, Laplacian},
\end{aligned} \label{problem_identification}
\end{equation}
where
\begin{align*}
X:=\begin{bmatrix}
    x_0 & x_1 & \cdots & x_{N-1}
\end{bmatrix} \in  \mathbb{R}^{n\times N},\quad X':=\begin{bmatrix}
    x_1 & x_2 & \cdots & x_{N}
\end{bmatrix} \in \mathbb{R}^{n\times N}.
\end{align*}
A similar problem formulation can be found in \cite{obara2024stable}.

Because the constraint in \eqref{problem_identification} can be expressed as a closed convex set, as explained in Section \ref{Sec_Graphs}, the problem \eqref{problem_identification} can be solved using a projected gradient method onto the constraint.
In this method, we need to iteratively solve
\begin{align}
\begin{aligned}
\min_{L\in \mathbb{R}^{n\times n}} \quad & \|A-L\|_{\rm F}^2\\
\mathrm{subject\ to}\quad& L
\mathrm{\,\, is\,\,a\,\,graph\,\, Laplacian},
\end{aligned} \label{problem_projection}
\end{align}
where $A\in \mathbb{R}^{n\times n}$ is a given matrix.
 Detailed formulations of \eqref{problem_projection} are presented in Sections \ref{Sec_pro_formulation} and \ref{Sec_pro_formulation2} of this paper.

\subsubsection{Model reduction problem} \label{Intro_reduction}

Optimization problem \eqref{problem_projection} arises when we consider an $H^2$ optimal model reduction problem.
In fact, suppose that
\begin{align}
    \begin{cases}
        \dot{x}(t) = -Lx(t) + Bu(t)\\
        y(t) =Cx(t)
    \end{cases} \label{system_original}
\end{align}
is a large-scale graph Laplacian dynamics with input $u(t)\in \mathbb {R}^m$ and output $y(t)\in \mathbb{R}^p$.
To facilitate the analysis and control of system \eqref{system_original}, we aim to reduce its size by approximating it with a small-scale graph Laplacian dynamics
\begin{align}
    \begin{cases}
        \dot{x}_r(t) = -L_rx_r(t) + B_ru(t)\\
        y_r(t) =C_rx_r(t),
    \end{cases} \label{system_reduced}
\end{align}
where $L_r\in \mathbb{R}^{r\times r}$ is a reduced graph Laplacian matrix.
That is,
we aim to design $(L_r, B_r, C_r)$ such that the systems \eqref{system_original} and \eqref{system_reduced} are as close as possible in the sense of the $H^2$ norm.
To this end, we can consider iteratively updating $(L_r, B_r, C_r)$ using the cyclic block projected gradient method proposed in \cite{sato2023reduced}.
In this approach, we iteratively solve the optimization problem \eqref{problem_projection}, with $n$ and $L$ replaced by $r$ and $L_r$, respectively.
By adopting this method, we can address the model reduction problem for Laplacian dynamics, as outlined in Section VII of \cite{sato2023reduced} as future work.

\subsection{Objective and Related works}

Therefore, under the assumption that the network structure is known, we develop algorithms to solve the nearest graph Laplacian problem \eqref{problem_projection}.
This assumption is based on the fact that, unlike the identification of nodes and edges, determining the edge weights is challenging in practice due to sensor noise and the lack of quantification methods \cite{Liu2011-rc, terasaki2022minimal, terasaki2024minimal}. A typical example of this setting is in social relation networks, where it is relatively easy to identify the existence of a relationship but difficult to quantify its strength.


It is worth noting that the problem of finding the nearest matrix within certain matrix classes has been well-studied.
In \cite{sato2019optimal}, an efficient and simple algorithm is proposed for solving the nearest graph Laplacian problem in the entry-wise 1-norm.
In \cite{Anderson2017-zy}, the author studied the problem of the nearest stable Metzler matrix. 
In \cite{Bai2007-wb}, the authors studied the nearest doubly stochastic matrix problem. This problem bears some resemblance to ours, as the feasible set is defined by constraints on row sums, column sums, and signatures. The problem of finding the nearest correlation matrix was considered by the authors in \cite{Higham2002-mq, Qi2006-mk}. In \cite{Gillis2017-ze, Orbandexivry2013-dc}, the authors studied the nearest system subject to constraints on its stability properties. However, these methods are not applicable to problem \eqref{problem_projection}, as they may not yield a matrix that is a graph Laplacian.

\subsection{Contribution}
\label{subsec: Contribution}
\begin{itemize}
    \item We reformulate the problem of constructing the nearest loop-less and loopy graph Laplacians, as described in \eqref{problem_projection}, into convex quadratic optimization problems. Theoretical properties arising from the special structures of these problems are examined in detail.
    
   
    \item Using the theoretical properties, we develop an active set algorithm and a direct computation algorithm based on Karush-Kuhn-Tucker (KKT) points for solving our optimization problems.
     The algorithms compute the optimal solutions directly.
    Notably, the active set algorithm can be seen as an application of the proposed algorithm in \cite{Kunisch2003-jz}.
    However, we prove some stronger results specialized for our specific problem. In fact, the authors of \cite{Kunisch2003-jz} showed that the algorithm stops within $2^d$ times of updating the active set, while our case stops within $d$ times of updates.

    

    \item 
  We derive the computational complexities of the proposed algorithms and show that they are more favorable than those of the interior point method and V-FISTA, with the latter being an accelerated projected gradient method. Moreover, we emphasize that, unlike the interior point method and V-FISTA, the proposed methods possess the finite termination property. Through numerical experiments, we demonstrate the effectiveness of the algorithms based on the active set method and direct computation of KKT points. Furthermore, we illustrate that the algorithm based on the active set method encounters some worst-case scenarios where its performance may degrade.

\end{itemize}

\subsection{Outline}
The rest of the paper is organized as follows. In Section \ref{notations}, we introduce notations and define some basic concepts in graph theory. In Section \ref{DSG}, we consider the nearest graph Laplacian problem in the case of directed simple graphs and propose two novel methods. In Section \ref{DG}, we generalize the algorithms to graphs with self-loops. Experimental results are presented in Section \ref{Exp}, and the conclusion is derived in Section \ref{Conclusion}.

\section{Preliminaries}\label{notations}
\subsection{Matrices and Vectors}
Let $\mathbb{R},\ \mathbb{R}_{\geq 0}$, and $\mathbb{N}$ be the set of all real numbers, non-negative real numbers, and positive integers, respectively. For a finite set $X$, let $|X|$ be its cardinality. Let $[n]:=\{1,\dots,n\}$ for $n\in \mathbb{N}$. 
Let $\bm 1_{p\times q}, \bm 0_{p\times q}\in \mathbb{R}^{p\times q}$ be the $p \times q$ dimensional matrices of all ones and zeros, respectively. We use the shorthands $\bm1_d =\bm 1_{d\times 1},\bm 0_d =\bm 0_{d\times 1}$. 
$\mathrm{I}_d$ denotes the $d$-dimensional identity matrix and $\mathrm{J}_d:=\bm 1_{d\times d}$ denotes the $d\times d$ matrix of all ones. We frequently use the matrix $Q_d:=2\mathrm{I}_d+2\mathrm{J}_d$, which is the matrix with $4$ for the diagonal components and $2$ for the non-diagonal components. For any $i,j\in [n]$, let $A_{ij}$ denote the $(i,j)$-element of a matrix $A\in \mathbb{R}^{n\times n}$ and $b_F\in\mathbb{R}^{|F|}$ denote the subvector of $b\in \mathbb{R}^n$ obtained by indices $F\subset [n]$. Given a matrix $A\in \mathbb{R}^{n\times n}$, $A^\top$ and $\mathrm{tr}(A)$ represent the transpose and the trace of $A$, respectively. Let $\|\cdot\|_\mathrm{F}$ denote the Frobenius norm of a matrix, that is, $\|A\|_\mathrm{F}=\sqrt{\mathrm{tr}\left(A^\top A\right)}$. The Frobenius norm can also be seen as the entrywise 2-norm, that is, $\|A\|_{\mathrm{F}}=\sqrt{\sum_{i=1}^n\sum_{j=1}^nA_{ij}^2}$. For same size vectors $b,c\in \mathbb{R}^d$, $b\leq c$ denotes the entry-wise inequality $b_i \leq c_i\ (i=1,\dots,d)$. Let $\mathrm{diag}\{a_1,\dots, a_n\}$ be the diagonal matrix with diagonal components $a_1,\dots,a_n$.

The following proposition is utilized multiple times throughout this paper.

\begin{proposition}[Sherman-Morrison Formula \cite{meyer2000matrix}]\label{Sherman-Morrison}
Suppose $A\in \mathbb{R}^{n\times n}$ is a nonsingular matrix and $u,v\in \mathbb{R}^n$ are vectors. If $A+uv^\top$ is nonsingular, the inverse matrix is,
\begin{equation}
\left(A+uv^\top\right)^{-1}=A^{-1}-\frac{1}{1+v^\top A^{-1}u}A^{-1}uv^\top A^{-1}.
\end{equation}
\end{proposition}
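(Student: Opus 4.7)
The plan is to prove the identity by direct verification: multiply the claimed inverse on the left by $A + uv^\top$ and confirm that the product equals $\mathrm{I}_n$. Since $A+uv^\top$ is assumed nonsingular and the identity is an equality of two square matrices, showing one-sided invertibility suffices. A preliminary remark is that the scalar $1 + v^\top A^{-1} u$ is automatically nonzero under the hypotheses, because if it vanished then $A^{-1}u$ would be a nonzero vector sent to zero by $A + uv^\top$ (as $(A + uv^\top)A^{-1}u = u + u\, v^\top A^{-1} u = u(1 + v^\top A^{-1} u) = 0$ while $A^{-1}u \neq 0$ whenever $u \neq 0$; the case $u = 0$ is trivial). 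So the right-hand side is well defined.

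For the main computation, I would expand
\begin{equation*}
(A + uv^\top)\left(A^{-1} - \frac{1}{1 + v^\top A^{-1} u} A^{-1} u v^\top A^{-1}\right)
\end{equation*}
into four pieces. The first two give $(A + uv^\top)A^{-1} = \mathrm{I}_n + u v^\top A^{-1}$. The remaining two, after factoring the scalar, produce
\begin{equation*}
-\frac{1}{1 + v^\top A^{-1} u}\bigl(u + u(v^\top A^{-1} u)\bigr) v^\top A^{-1}
= -\frac{1 + v^\top A^{-1} u}{1 + v^\top A^{-1} u}\, u v^\top A^{-1}
= -u v^\top A^{-1},
\end{equation*}
where I used that $v^\top A^{-1} u$ is a scalar and commutes past $u$. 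Summing the two contributions yields $\mathrm{I}_n$, which completes the verification.

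There is really no hard step here; the only subtlety is the sanity check that the denominator $1 + v^\top A^{-1} u$ is nonzero, which is the content of the hypothesis that $A + uv^\top$ be nonsingular. Since this proposition is a classical and well-known identity (cited to \cite{meyer2000matrix}), I would not spend more than a short paragraph on it and might even omit the computation in the final manuscript, merely stating the formula and referencing the textbook derivation.
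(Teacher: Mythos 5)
Your verification is correct, including the useful observation that nonsingularity of $A+uv^\top$ forces $1+v^\top A^{-1}u\neq 0$. The paper gives no proof of this proposition at all---it is stated as a classical result with a citation to \cite{meyer2000matrix}---so your direct multiplication is exactly the standard argument one would supply, and your instinct that it can be omitted or relegated to the reference matches what the paper actually does.
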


\subsection{Graphs} \label{Sec_Graphs}
Let $G=(V,E,w)$ be a weighted directed graph, with the node set $V = \{1,2,\dots,n\}$, the edge set $E\subseteq V\times V$, and the edge weight function $w: E\to \mathbb{R}_{\geq 0}$. We do not consider any multiedges. The pair $(V,E)$ is called a graph structure. The number of edges will be denoted by $m$.
The directed edge from node $i$ to node $j$ will be denoted by $\{i,j\}$. The edge $\{i,i\}$ denotes the self-loop on node $i$. 
The neighbor set of node $i$ is defined as $\mathcal{N}(i):=\left\{j\in V\mid \{i,j\}\in E,\ i\neq j\right\}$. For any edge $\{i,j\}\in E$, $w_{ij}:=w(\{i,j\})$ shows the edge weight of $\{i,j\}$. We assume that edge weights are nonnegative. The weighted adjacency matrix $W$ is an $n\times n$ matrix with $W_{ij}=w_{ij}$ for any $\{i,j\}\in E$ and $W_{ij}=0$ for $\{i,j\}\notin E$. 
\begin{definition}[simple graph]
A simple graph is a graph with no self-loops (i.e., $\{i,i\}\notin E\ (i=1,\dots,n)$) and no multiedges.
\end{definition}
Using the weighted degree matrix $D := \mathrm{diag}\{D_1,\dots,D_n\}$, where $D_i := \sum_{j=1}^n W_{ij}$ is the weighted degree, the graph Laplacian of $G$ is defined as follows.
\begin{definition}[(loop-less) graph Laplacian]\label{def_loopless}
The loop-less graph Laplacian of a weighted graph $G=(V,E,w)$ is defined as
\begin{equation*}
L:=D-W.
\end{equation*}
\end{definition}
By Definition \ref{def_loopless}, loop-less graph Laplacians of directed graphs satisfy the following (\ref{label_diag_nondiag}) and (\ref{row_sum_zero}).
\begin{itemize}
\item The diagonal elements $L_{ii}$ are non-negative, the non-diagonal elements $L_{ij}$ are non-positive for $\{i,j\}\in E$ and zero for $\{i,j\}\notin E$, i.e.,
\begin{equation}\label{label_diag_nondiag}
L_{ii}\geq 0\ (i=1,\dots,n),\ L_{ij}\leq 0\ (i\neq j,\{i,j\}\in E),\ L_{ij}=0\ (i\neq j,\{i,j\}\notin E).\end{equation}
\item The row-sum is zero, i.e.,
\begin{equation}\label{row_sum_zero}
L\bm 1_n = \bm 0_n.
\end{equation}
\end{itemize}
Conversely, when the graph structure $(V, E)$ is simple, any matrix $L\in \mathbb{R}^{n\times n}$ satisfying properties (\ref{label_diag_nondiag}) and (\ref{row_sum_zero}), uniquely determines the edge weights by defining $w_{ij}=-L_{ij}\ (\{i,j\}\in E)$.
Thus, the set of all directed loop-less graph Laplacians of a simple graph structure $(V, E)$ is defined as
\begin{equation}
\label{def_Lsd}
\mathcal{L}_{sd}(V,E):=\{L\in \mathbb{R}^{n\times n}\mid L\ \mathrm{satisfies}\ (\ref{label_diag_nondiag}),(\ref{row_sum_zero})\}.
\end{equation}

Although the information about self-loops is lost in Definition \ref{def_loopless}, the loopy graph Laplacian for general graphs with self-loops is defined as follows.
\begin{definition}[(loopy) graph Laplacian]
Let $S$ be the self-loop matrix defined as $S:=\mathrm{diag}\{W_{ii},\dots,W_{nn}\}$. The loopy graph Laplacian of a weighted graph $G=(V,E,w)$ is defined as
\begin{equation*}
L:=D-W+S.
\end{equation*}
\end{definition}
Here, loopy graph Laplacians satisfy (\ref{label_diag_ns}) and (\ref{label_diag}) instead of (\ref{row_sum_zero}).
\begin{itemize}
\item If node $i$ does not have a self-loop, the row sum of row $i$ equals zero, i.e.,
\begin{equation}\label{label_diag_ns}
\{i,i\}\notin E \quad\Rightarrow \quad\sum_{j=1}^n L_{ij}= 0.
\end{equation}
\item If node $i$ has a self-loop, the row sum of row $i$ is nonnegative, i.e.,
\begin{equation}\label{label_diag}
\{i,i\}\in E \quad\Rightarrow \quad \sum_{j=1}^n L_{ij}\geq 0.
\end{equation}
\end{itemize}

Similar to the case of simple graphs, the set of all loopy graph Laplacians of a directed graph structure $(V, E)$ is defined as
\begin{equation}
\label{def_Ld}
\mathcal{L}_d(V,E):=\{L\in \mathbb{R}^{n\times n}\mid L\ \mathrm{satisfies}\ (\ref{label_diag_nondiag}),(\ref{label_diag_ns}),(\ref{label_diag})\}.
\end{equation}


\section{Loop-Less Graph Laplacians}\label{DSG}
In this section, we consider the problem of finding the nearest graph Laplacian to a given arbitrary matrix, in the case of loop-less graph Laplacians of simple directed graphs. The problem is reformulated as a convex quadratic optimization problem with non-positivity constraints. 
To solve the problem, we propose two efficient algorithms, and we prove some properties of the algorithms.

\subsection{Problem Formulation} \label{Sec_pro_formulation}
Let $G=(V, E, w)$ be a weighted simple directed graph, i.e., $\{i,i\}\notin E$ for any $i=1,2,\dots,n$. 


Our purpose is to reconstruct the graph Laplacian from a ``noisy'' Laplacian matrix by finding the nearest matrix that satisfies the conditions of a graph Laplacian of $(V, E)$. 
Because
the set of matrices that satisfy the conditions of directed loop-less graph Laplacians is defined in (\ref{def_Lsd}),
our problem can be formulated as follows.
\begin{framed}
\begin{problem}
\label{prob_1}
Given a graph structure $(V,E)$ and a matrix $A\in \mathbb{R}^{n\times n},$
\begin{equation*}
\begin{aligned}
\min_{L\in \mathbb{R}^{n\times n}} \quad & \|A-L\|_\mathrm{F}^2\\
\mathrm{subject\ to}\quad& L\in \mathcal{L}_{sd}(V,E).
\end{aligned}
\end{equation*}
\end{problem}
\end{framed}

Note that
we assume that the graph structure $(V, E)$ is known, and
minimizing $\|A-L\|_{\mathrm{F}}^2$ is equivalent to minimizing $\|A-L\|_{\mathrm{F}}$.


Problem \ref{prob_1} is a convex quadratic problem with linear equality and inequality constraints and could be solved by quadratic solvers or convex solvers. However, the equality constraint is relatively difficult to tackle and a better formulation could be derived by exploiting the structure of our specific problem.

First, we derive an equivalent convex quadratic optimization problem only with non-positivity constraints.
For any $A\in \mathbb{R}^{n\times n}$ and $L\in \mathcal{L}_{sd}(V,E)$,
\begin{align}
\|A-L\|_{\mathrm{F}}^2&=\sum_{i=1}^n(A_{ii}-L_{ii})^2+\sum_{\{i,j\}\in E}(A_{ij}-L_{ij})^2+\sum_{\{i,j\}\notin E,\,i\neq j}(A_{ij}-0)^2\nonumber\\
&=\sum_{i=1}^n\sum_{j=1}^nA_{ij}^2+\sum_{i=1}^n\left\{L_{ii}^2-2A_{ii}L_{ii}+\sum_{j\in \mathcal{N}(i)}(L_{ij}^2-2A_{ij}L_{ij}) \right\}.\nonumber
\end{align}
Since \eqref{row_sum_zero} holds, the diagonal element of $L$ can be written as
$L_{ii} = -\sum_{j\in \mathcal{N}(i)} L_{ij}\quad(i=1,\dots,n)$.
Thus, the objective function of Problem \ref{prob_1} can be rewritten as
\begin{align*}
&\|A-L\|_{\mathrm{F}}^2\nonumber\\
&=\|A\|_\mathrm{F}^2+\sum_{i=1}^n\left\{\left(-\sum_{j\in \mathcal{N}(i)}L_{ij} \right)^2-2A_{ii}\left(-\sum_{j\in \mathcal{N}(i)}L_{ij} \right)+\sum_{j\in \mathcal{N}(i)}(L_{ij}^2-2A_{ij}L_{ij}) \right\}\nonumber\\
&=\|A\|_\mathrm{F}^2+\sum_{i=1}^n\left\{\sum_{j\in \mathcal{N}(i)}2L_{ij}^2+\sum_{j,k\in \mathcal{N}(i), j\neq k}2L_{ij}L_{ik}+\sum_{j\in \mathcal{N}(i)}(2A_{ii}-2A_{ij})L_{ij}\right\}\nonumber\\
&=\frac{1}{2}x^\top Q x + b^\top x+\|A\|_{\mathrm{F}}^2, 
\end{align*}
where
\begin{align}
\label{def_Q}
Q &:= \begin{bmatrix}Q_{d_1}&&\\&\ddots & \\ &&Q_{d_n}\end{bmatrix},\ 
Q_d := \begin{bmatrix}4&2&\dots& 2\\2&4 &2&\vdots \\ \vdots&&\ddots&2\\2&\dots&2&4\end{bmatrix}=2\mathrm{I}_d+2\mathrm{J}_d,\\
d_i&:=|\mathcal{N}(i)|,\quad
x:=\begin{bmatrix}L_{i_1j_1},L_{i_2j_2},\ldots, L_{i_mj_m}\end{bmatrix}^\top,\nonumber \\
\label{def_b}b&:=\begin{bmatrix}2A_{i_1i_1}-2A_{i_1j_1},\dots,2A_{i_mi_m}-2A_{i_mj_m}\end{bmatrix}^\top.\nonumber
\end{align}
Here, $\{i_k,j_k\}$ is the $k$-th edge in $E$, and $d_1+\cdots +d_n =m$.
Since the elements in $x$ are the non-diagonal elements in $L$, every element in $x$ must be nonpositive. Thus, Problem \ref{prob_1} can be reformulated as follows.
\begin{framed}
\begin{problem}
\label{prob_2}
\begin{equation*}
\begin{aligned}
\min_{x \in \mathbb{R}^{m}} \quad & \frac{1}{2}x^\top Q x + b^\top x\\
\mathrm{subject\ to} \quad& x\leq \bm 0_m.
\end{aligned}
\end{equation*}
\end{problem}
\end{framed}
Since $Q$ defined as \eqref{def_Q} is a block diagonal matrix with $n$ blocks, we can divide Problem \ref{prob_2} into $n$ smaller problems, where each block corresponds to each row of $L$. The one-block problem of the $i$-th row $(i=1,\dots,n)$ is formulated as follows.
\begin{framed}
\begin{problem}[one-block problem of row $i$]
\label{prob_3}
\begin{equation*}
\begin{aligned}
\min_{x \in \mathbb{R}^{d_i}} \quad & \frac{1}{2}x^\top Q_{d_i} x + b_{(i)}^\top x\\
\mathrm{subject\ to} \quad& x\leq \bm 0_{d_i}.
\end{aligned}
\end{equation*}
\[Q_{d_i} := 2\mathrm{I}_{d_i}+2\mathrm{J}_{d_i},\ b_{(i)} := \begin{bmatrix}b_{d_1+\dots,+d_{i-1}+1},\dots,b_{d_1+\dots+d_i}\end{bmatrix}^\top.\]
\end{problem}
\end{framed}

We focus on Problem \ref{prob_3} in the following sections. When there is no need to specify the row, we might omit the notation $i$, such as $Q_d$ instead of $Q_{d_i}$, and $b$ instead of $b_{(i)}$.


An optimal solution to Problem \ref{prob_1} uniquely exists,
because $Q_d$ is a symmetric positive definite matrix as shown in the following.

\begin{lemma}\label{lem_Qd_spd}
For any $d\in \mathbb{N}$, $Q_d$ is a symmetric positive definite matrix with eigenvalues $2+2d$ (with multiplicity $1$) and $2$ (with multiplicity $d-1$). 
The inverse matrix of $Q_d$ is,
\begin{equation}
\label{Qd_inv}
Q_d^{-1} = \frac{1}{2}\left(\mathrm{I}_d-\frac{1}{1+d}\mathrm{J}_d\right).
\end{equation}
\end{lemma}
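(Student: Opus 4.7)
The plan is to decompose the claim into three parts and dispatch each in turn. Symmetry is immediate since $Q_d = 2\mathrm{I}_d + 2\mathrm{J}_d$ is a sum of symmetric matrices. For the spectrum, I would exploit the rank-one representation $\mathrm{J}_d = \bm 1_d \bm 1_d^\top$: this identifies $\bm 1_d$ as an eigenvector of $\mathrm{J}_d$ with eigenvalue $d$, while the orthogonal complement $\bm 1_d^\perp$ is its $(d-1)$-dimensional kernel. Translating via $Q_d = 2(\mathrm{I}_d + \mathrm{J}_d)$ then yields eigenvalues $2 + 2d$ (multiplicity one, with eigenvector $\bm 1_d$) and $2$ (multiplicity $d-1$, from $\bm 1_d^\perp$). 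Both are strictly positive, so $Q_d$ is symmetric positive definite, which also hands us invertibility for free.

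For the explicit inverse formula, my preferred route is to invoke the Sherman--Morrison formula (Proposition~\ref{Sherman-Morrison}) with $A = 2\mathrm{I}_d$, $u = 2\bm 1_d$, and $v = \bm 1_d$, so that $A + uv^\top = 2\mathrm{I}_d + 2\bm 1_d\bm 1_d^\top = Q_d$. Since $A^{-1} = \frac{1}{2}\mathrm{I}_d$ and $1 + v^\top A^{-1} u = 1 + d \neq 0$, the formula applies; computing $A^{-1} u v^\top A^{-1} = \frac{1}{2}\mathrm{J}_d$ and simplifying produces exactly $Q_d^{-1} = \frac{1}{2}\bigl(\mathrm{I}_d - \frac{1}{1+d}\mathrm{J}_d\bigr)$. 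As a sanity check, one can instead verify the formula directly by multiplying $Q_d$ against the claimed inverse using the identity $\mathrm{J}_d^2 = d\,\mathrm{J}_d$ and confirming that the result reduces to $\mathrm{I}_d$; this provides an independent, purely algebraic route that avoids Sherman--Morrison entirely.

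There is no real obstacle here: every step is routine linear algebra. The only mild care needed is in the Sherman--Morrison bookkeeping, since $Q_d$ admits several equivalent rank-one-update decompositions (for instance, one could instead take $u = \bm 1_d$ and $v = 2\bm 1_d$, or absorb a scalar differently), but all choices lead to the same inverse. I would record the eigenvalue information explicitly at the end because subsequent arguments in the paper are likely to need not only the inverse but also spectral bounds, such as the fact that the smallest eigenvalue of $Q_d$ equals $2$ independently of $d$.
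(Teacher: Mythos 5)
Your proposal is correct and follows essentially the same route as the paper: eigenvalues of $Q_d$ are obtained from the rank-one structure of $\mathrm{J}_d = \bm 1_d\bm 1_d^\top$ (eigenvalue $d$ on $\bm 1_d$, zero on its orthogonal complement), positive definiteness follows from the positive spectrum, and the inverse is produced by Sherman--Morrison. You simply spell out the Sherman--Morrison bookkeeping more explicitly than the paper does, and your alternative check via $\mathrm{J}_d^2 = d\,\mathrm{J}_d$ is a fine but unnecessary addition.
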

\begin{proof}
$\mathrm{J}_d=\bm1_d\bm 1_d^\top$ is a symmetric matrix with $\mathrm{rank}(\mathrm{J}_d)=1$. The number of nonzero eigenvalues of a symmetric matrix is equal to its rank and hence $\mathrm{J}_d$ has only one nonzero eigenvalue, that is, eigenvalue $d$ with eigenvector $\bm 1_d$. Thus, the eigenvalues of $Q_d=2\mathrm{I}_d+2\mathrm{J}_d$ are $2+2d$ with multiplicity $1$ and $2$ with multiplicity $d-1$.
Thus, $Q_d$ is nonsingular, and by the Sherman-Morrison formula (Proposition \ref{Sherman-Morrison}), we have \eqref{Qd_inv}.
\end{proof}

Problem \ref{prob_3} can be solved using
the primal-dual active set algorithm proposed by Kunisch and Rendl \cite{Kunisch2003-jz}.
In Theorem 3.4 of \cite{Kunisch2003-jz}, Kunisch and Rendl showed that the update of the active set occurs at most $2^{d_i}$ times.
In the next section, we prove that our active set algorithm stops after at most $d_i$ updates thanks to the special structure of $Q_{d_i}$.
It should be remarkable that our analysis in the next section is considerably different from that in \cite{Kunisch2003-jz}.

\subsection{Proposed Algorithm 1} \label{Sec_Alg1}
 In this section, we construct an iterative algorithm for solving Problem \ref{prob_3} that can be seen as an active set method \cite{More1989-fm, Kunisch2003-jz}. The main idea of active set methods is dividing the inequality constraints into two sets, the active set $B \subseteq [d]$ and the free set $F=[d]\backslash B$. We fix the variables in the active set constraints onto the constraint bound (i.e., $x_i=0$ for any $i\in B$), and then solve the unconstrained optimization problem by ignoring the constraints in $F$. An optimal active set leads to the optimal solution of the original optimal solution, and therefore we need to construct a good approximation and updating method of the active set.

Let $x^{\ast}\in \mathbb{R}^d$ be the optimal solution to Problem \ref{prob_3}. 
Using $b':=-Q_d^{-1}b$, $x^\ast$ can be seen as the minimizer of
\[g(x):=\frac{1}{2}(x-b')^\top Q_d(x-b')=\frac{1}{2}x^\top Q_d x + b^\top x+ \mathrm{constant.}\]
over $C_d$,
where
\begin{align}
C_d := \{x\in \mathbb{R}^d\mid x_i \leq 0\ (i=1,\dots,d)\}. \label{C_d}
\end{align}
Thus, $b'$ is the optimal solution of the unconstrained version of Problem \ref{prob_3}.

\begin{lemma}\label{thm_grad_newton}
$b':=-Q_d^{-1}b$ can be calculated in $O(d)$ time.
\end{lemma}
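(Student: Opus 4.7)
The plan is to use the closed-form expression for $Q_d^{-1}$ established in Lemma \ref{lem_Qd_spd}, namely
\[
Q_d^{-1} = \frac{1}{2}\left(\mathrm{I}_d - \frac{1}{1+d}\mathrm{J}_d\right),
\]
and simply exhibit a concrete $O(d)$-time procedure that evaluates $-Q_d^{-1}b$ entry by entry.

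First, I would substitute the formula to get
\[
b' \;=\; -Q_d^{-1}b \;=\; -\tfrac{1}{2}b \;+\; \tfrac{1}{2(1+d)}\,\mathrm{J}_d\,b.
\]
Since $\mathrm{J}_d = \bm{1}_d\bm{1}_d^{\top}$, the product $\mathrm{J}_d b$ equals the constant vector $(\sum_{j=1}^d b_j)\,\bm{1}_d$. Thus the $i$-th component of $b'$ can be written explicitly as
\[
b'_i \;=\; -\tfrac{1}{2}b_i \;+\; \tfrac{1}{2(1+d)} \sum_{j=1}^d b_j, \qquad i=1,\dots,d.
\]

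The algorithm is then immediate: compute the scalar $s := \sum_{j=1}^d b_j$ in $O(d)$ time by a single pass through the entries of $b$; next compute $t := s/(2(1+d))$ in $O(1)$ time; finally set $b'_i := -b_i/2 + t$ for each $i=1,\dots,d$, which is another $O(d)$ work. Adding these contributions gives the stated $O(d)$ bound.

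There is no real obstacle here; the lemma is essentially a remark highlighting that the explicit inverse from Lemma \ref{lem_Qd_spd} sidesteps the naive $O(d^2)$ matrix-vector product. I would present the derivation in two or three lines, emphasizing that the low cost stems from the rank-one-plus-identity structure of $Q_d$, which is exactly what the Sherman--Morrison formula exploited in the proof of Lemma \ref{lem_Qd_spd}.
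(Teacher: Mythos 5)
Your proposal is correct and follows exactly the paper's own argument: apply the closed-form inverse $Q_d^{-1}=\tfrac{1}{2}(\mathrm{I}_d-\tfrac{1}{1+d}\mathrm{J}_d)$ from Lemma \ref{lem_Qd_spd}, note that $\mathrm{J}_d b=(\sum_j b_j)\bm 1_d$, and read off the $O(d)$ cost. The extra detail about the explicit per-entry formula and the scalar precomputation is fine but not a different route.
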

\begin{proof}
From \eqref{Qd_inv},
\begin{align*}
    b'=-\frac{1}{2}b+\frac{1}{2}\frac{1}{1+d}\left(\sum_{i=1}^d b_i\right)\bm 1_d.
\end{align*}
Thus, $b'$ can be calculated in $O(d)$ time.
\end{proof}

By definition of $Q_d$, 
\begin{align}
g(x) = \|x-b'\|_2^2+\left(\bm 1^\top (x-b')\right)^2,    \label{eq_g}
\end{align}
which is used in the following.

\begin{lemma}
\label{lem_ineq}
Inequality $\bm{1}^\top x^\ast \leq \bm 1^\top b'$ holds.
\end{lemma}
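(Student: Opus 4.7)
The plan is to derive the inequality directly from the Karush--Kuhn--Tucker (KKT) conditions of Problem \ref{prob_3} written in the form $\min_{x\in C_d} g(x)$.

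First I would compute the gradient of $g$ using the expression \eqref{eq_g}. Since $g(x)=\|x-b'\|_2^2+(\bm 1^\top(x-b'))^2$, one obtains $\nabla g(x)=2(x-b')+2\bm 1\bm 1^\top(x-b')=Q_d(x-b')$, where the last equality uses $Q_d=2\mathrm{I}_d+2\mathrm{J}_d$. The constraint $x\le\bm 0_d$ defining $C_d$ is linear, so writing the Lagrangian $g(x)+\mu^\top x$ with a multiplier $\mu\in\mathbb{R}^d_{\ge 0}$, the KKT conditions for the unique minimizer $x^\ast$ (unique by strict convexity guaranteed by Lemma \ref{lem_Qd_spd}) read
\begin{equation*}
Q_d(x^\ast-b')+\mu=\bm 0_d,\qquad \mu\ge\bm 0_d,\qquad x^\ast\le\bm 0_d,\qquad \mu_i x^\ast_i=0.
\end{equation*}

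The key step is then a one-line summation. From the stationarity condition $Q_d(x^\ast-b')=-\mu\le\bm 0_d$, I would left-multiply by $\bm 1^\top$ and exploit the fact that $\bm 1$ is an eigenvector of $Q_d$: $\bm 1^\top Q_d=\bm 1^\top(2\mathrm{I}_d+2\mathrm{J}_d)=(2+2d)\bm 1^\top$. Therefore
\begin{equation*}
(2+2d)\,\bm 1^\top(x^\ast-b')=-\bm 1^\top\mu\le 0,
\end{equation*}
and dividing by the positive factor $2+2d$ gives $\bm 1^\top x^\ast\le\bm 1^\top b'$, which is the desired inequality.

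I do not anticipate a serious obstacle here: the argument is essentially a two-step KKT manipulation that relies only on (i) $Q_d$ being symmetric positive definite, already established in Lemma \ref{lem_Qd_spd}, and (ii) the fact that $\bm 1$ is an eigenvector of $Q_d$ with positive eigenvalue. The only point worth verifying carefully is the sign convention in the KKT multiplier for the inequality $x\le\bm 0_d$, to ensure $\mu\ge\bm 0_d$ yields $\nabla g(x^\ast)\le\bm 0_d$ componentwise; once that is set up correctly, the summation argument is immediate.
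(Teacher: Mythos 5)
Your proof is correct, but it takes a genuinely different route from the paper. The paper argues by contradiction: assuming $\bm 1^\top x^\ast > \bm 1^\top b'$ (in the only nontrivial case $\bm 1^\top b' < 0$), it orthogonally projects $x^\ast$ onto the hyperplane $\{\bm 1^\top x = \bm 1^\top b'\}$, observes that the projection is still feasible because it only decreases every coordinate, and then uses the decomposition \eqref{eq_g} together with the Pythagorean identity to show the projection has strictly smaller objective value — contradicting optimality. Your argument instead reads the inequality off the first-order conditions: from stationarity $Q_d(x^\ast - b') = -\mu$ with $\mu \ge \bm 0_d$ (your sign convention for the multiplier of $x \le \bm 0_d$ is right), left-multiplying by $\bm 1^\top$ and using that $\bm 1$ is an eigenvector of $Q_d$ with eigenvalue $2+2d > 0$ gives the claim in one line; complementary slackness is not even needed. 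Both proofs ultimately exploit the same structural fact about $Q_d = 2\mathrm{I}_d + 2\mathrm{J}_d$, but yours is shorter and more transparent, at the cost of invoking the necessity of the KKT conditions (valid here since the constraints are linear, and consistent with what the paper itself asserts after Definition \ref{Def_KKT}), whereas the paper's argument is entirely elementary and self-contained, using nothing beyond the definition of a minimizer. Note also that the paper only introduces the KKT machinery in Section \ref{Sec_Alg2}, after this lemma; there is no circularity in your approach, but it does front-load a tool the paper deliberately postpones.
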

\begin{proof}
If $\bm 1^\top b'\geq 0$, $x^\ast\in C_d$ yields 
$\bm 1^\top x^\ast \leq 0\leq \bm 1^\top b'$. 
Thus, we assume $\bm 1^\top b'< 0$. Suppose that
\begin{equation}\label{assumption_lem}\bm{1}^\top x^\ast > \bm 1^\top b'.\end{equation}
Let $x'$ be the orthogonal projection of $x^\ast$ onto the hyperplane $\{x\in \mathbb{R}^d \mid \bm 1 ^\top x = \bm 1^\top b'\}$. Because the explicit form of the orthogonal projection $x'$ is
$x'=x^\ast - \frac{1}{d}\left(\bm 1^\top x^\ast-\bm 1^\top b'\right)\bm 1$,
assumption (\ref{assumption_lem}) implies that each element of $x'$ is smaller than $x^\ast$, and thus $x'$ is a feasible point (i.e. $x'\in C_d$).
From \eqref{eq_g},
\begin{align}
g(x^\ast)-g(x')
\label{eq_IJ}
=\|x^\ast-b'\|_2^2+\left(\bm 1^\top (x^\ast-b')\right)^2-\|x'-b'\|_2^2-\left(\bm 1^\top (x'-b')\right)^2.
\end{align}
By the $d$-dimensional Pythagorean theorem, we have that
\begin{equation}\label{pytha}\|x^\ast-b'\|_2^2=\|x^\ast -x'\|_2^2+\|x'-b'\|_2^2.\end{equation}
Combining (\ref{eq_IJ}), (\ref{pytha}), and $\bm 1^\top x'=\bm 1^\top b'$, 
$g(x^\ast)-g(x')
>0$.
 This contradicts the fact that $x^\ast$ is the minimizer of $g$ over $C_d$.
\end{proof}
\begin{theorem}
\label{main_thm1}
For any $i = 1,\dots, d$, if $b'_i$ is positive, then $x_i^\ast=0$.
\end{theorem}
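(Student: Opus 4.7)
The plan is to argue by contradiction: assume $b'_i > 0$ but $x_i^\ast < 0$ (strict, since feasibility gives $x_i^\ast \le 0$), and produce a strictly better feasible point by pushing the $i$-th coordinate to zero. Specifically, I would define $\tilde x \in \mathbb{R}^d$ by $\tilde x_j = x_j^\ast$ for $j \ne i$ and $\tilde x_i = 0$; this lies in $C_d$ by construction.

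Next I would use the representation \eqref{eq_g} to compute $g(x^\ast) - g(\tilde x)$ explicitly. Only the $i$-th summand of $\|x-b'\|_2^2$ changes, contributing $(x_i^\ast)^2 - 2 x_i^\ast b'_i$. For the squared linear term, since $\bm 1^\top \tilde x = \bm 1^\top x^\ast - x_i^\ast$, a short expansion gives a contribution of $2 x_i^\ast \bm 1^\top(x^\ast - b') - (x_i^\ast)^2$. Adding these and canceling $(x_i^\ast)^2$ yields
\begin{equation*}
g(x^\ast) - g(\tilde x) \;=\; 2 x_i^\ast\bigl(\bm 1^\top x^\ast - \bm 1^\top b' - b'_i\bigr).
\end{equation*}

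The finishing step invokes Lemma \ref{lem_ineq}, which gives $\bm 1^\top x^\ast - \bm 1^\top b' \le 0$. Together with the standing hypothesis $b'_i > 0$, the parenthesized factor is strictly negative, while $x_i^\ast < 0$ by our contradiction assumption. Hence $g(x^\ast) - g(\tilde x) > 0$, contradicting the optimality of $x^\ast$. Therefore $x_i^\ast = 0$.

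The main obstacle I anticipate is not conceptual but bookkeeping: one must carefully track the sign and cancellation in the expansion of $(\bm 1^\top(\tilde x - b'))^2$ so that the $(x_i^\ast)^2$ terms cancel cleanly and the role of Lemma \ref{lem_ineq} becomes visible. Beyond that, the argument reduces to a one-variable perturbation along a coordinate direction, which is the natural move since the quadratic $g$ splits into a coordinatewise part plus a rank-one correction through $\bm 1$.
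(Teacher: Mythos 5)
Your proof is correct. I checked the key identity: with $\tilde x$ equal to $x^\ast$ except $\tilde x_i=0$, writing $s:=\bm 1^\top(x^\ast-b')$, the quadratic part contributes $(x_i^\ast)^2-2x_i^\ast b_i'$ and the rank-one part contributes $2sx_i^\ast-(x_i^\ast)^2$, so indeed $g(x^\ast)-g(\tilde x)=2x_i^\ast\bigl(s-b_i'\bigr)$, which is strictly positive once Lemma \ref{lem_ineq} gives $s\le 0$ and the hypotheses give $b_i'>0$, $x_i^\ast<0$. The paper proves the same statement with the same ingredients (contradiction, a one-coordinate perturbation, the representation \eqref{eq_g}, and Lemma \ref{lem_ineq}), but it splits into the cases $\bm 1^\top x^\ast=\bm 1^\top b'$ and $\bm 1^\top x^\ast<\bm 1^\top b'$; in the second case it does \emph{not} push $x_k^\ast$ all the way to zero but only to $\min\{0,\,x_k^\ast+\bm 1^\top b'-\bm 1^\top x^\ast\}$, precisely so that each of the two terms in \eqref{eq_g} can be shown to decrease separately. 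Your version avoids the case split because you add the two contributions before estimating: the $(x_i^\ast)^2$ gained in the coordinatewise term exactly cancels the $(x_i^\ast)^2$ potentially lost in the $(\bm 1^\top(\cdot))^2$ term, and what remains is sign-determined. That is a genuine streamlining — a single perturbation and a closed-form difference — whereas the paper's partial perturbation buys term-by-term monotonicity at the price of an extra case and a more delicate choice of $x'$. Either argument is complete; just make sure you state explicitly at the outset that feasibility forces $x_i^\ast\le 0$, so the negation of the conclusion is $x_i^\ast<0$, which you do.
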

\begin{proof}
Suppose that there exists an index $k\in\{1,\dots, d\}$ that satisfies $b'_k > 0$ and $x^\ast_k<0$. From Lemma \ref{lem_ineq}, it is sufficient to consider the following two cases. In the cases, we derive a contradiction by constructing a feasible point $x'$ that satisfies $g(x^\ast)-g(x')>0$.\\
(i) If $\bm 1^\top x^\ast = \bm 1^\top b'$, let $x'\in C_d$ be 
$x'_i = \begin{cases}0&(i=k)\\ x_i^\ast & (i\neq k)\end{cases}$.
From \eqref{eq_g},
\begin{align}
\label{eq_thm1_1}
g(x^\ast)-g(x')&=\|x^\ast-b'\|_2^2+\left(\bm 1^\top (x^\ast-b')\right)^2-\|x'-b'\|_2^2-\left(\bm 1^\top (x'-b')\right)^2.
\end{align}
From the definition of $x'$, we have
\begin{align}
\|x^\ast-b'\|_2^2-\|x'-b'\|_2^2&=\sum_{i=1}^d \left(x^\ast_i - b'_i\right)^2-\sum_{i=1}^d \left(x'_i-b'_i\right)^2\nonumber\\
&= \left(x^\ast_k-b'_k\right)^2-\left(0-b_k'\right)^2\nonumber\\
\label{eq_thm1_2}
&= {x_k^\ast}^2-2b'_kx_k^\ast.\\
\left(\bm 1^\top (x^\ast-b')\right)^2-\left(\bm 1^\top (x'-b')\right)^2&=\left(\bm 1^\top (x^\ast-b')\right)^2-\left(\bm 1^\top (x^\ast-b')-x_k^\ast\right)^2\nonumber\\
&= 0-(0-x_k^\ast)^2\nonumber\\
&=-{x_k^\ast}^2\label{eq_thm1_3}
\end{align}
Combining (\ref{eq_thm1_1}), (\ref{eq_thm1_2}), and (\ref{eq_thm1_3}),
$g(x^\ast)-g(x')=-2b'_kx_k^\ast>0$.

\noindent
(ii) If $\bm 1^\top x^\ast < \bm 1^\top b'$,  let $x'\in C_d$ be 
$x'_i = \begin{cases}\min\{0, x_k^\ast+\bm 1^\top b'-\bm 1^\top x^\ast\}&(i=k)\\ x_i^\ast & (i\neq k)\end{cases}$.
Similar to case (i), we have
\begin{align}
g(x^\ast)-g(x')
\label{eq_thm1_4}
= \left(x_k^\ast-b'_k\right)^2-\left(x'_k-b'_k\right)^2+\left(\bm 1^\top (x^\ast-b')\right)^2-\left(\bm 1^\top (x^\ast-b')-x_k^\ast+x_k'\right)^2.
\end{align}
From $x_k^\ast < x_k'\leq 0 <b_k'$, we have
\begin{equation}
\label{eq_thm1_5}
\left(x_k^\ast-b'_k\right)^2-\left(x'_k-b'_k\right)^2>0.
\end{equation}
Since $x_k^\ast <x'_k \leq x_k^\ast +\bm 1^\top b' -\bm 1^\top x^\ast$, we have
$\bm 1^\top x^\ast-\bm 1^\top b'< \bm 1^\top x^\ast-\bm 1 ^\top b'-x_k^\ast+x_k'\leq 0$,
and therefore, 
\begin{align}
\label{eq_thm1_6}
\left(\bm 1^\top (x^\ast-b')\right)^2-\left(\bm 1^\top (x^\ast-b')-x_k^\ast+x_k'\right)^2>0.
\end{align}
Combining (\ref{eq_thm1_4}), (\ref{eq_thm1_5}), and (\ref{eq_thm1_6}), we derive
$g(x^\ast)-g(x')>0$.
\end{proof}

Theorem \ref{main_thm1} provides a useful characterization of Problem \ref{prob_3}. 
In fact, this theorem asserts that, if the $i$-th element of the unconstrained minimizer $b'$ violates the constraint, we should fix the $i$-th element onto the boundary $0$. \par
Now, let us assume that $b'_{B}>\bm 0$ holds for indices $B\subseteq [d]$. From Theorem \ref{main_thm1}, we can solve Problem \ref{prob_3} with $x_i=0$ for every $i \in B$.
Denoting the left free variables by $F:=[d]\backslash B$, Problem \ref{prob_3} is,
\begin{align*}
\min_{x \in \mathbb{R}^{|F|}} \quad & \frac{1}{2}\begin{bmatrix}x^\top& \bm 0_{|B|}^\top\end{bmatrix} \begin{bmatrix}Q_{|F|}&2\cdot\bm 1_{|F|\times|B|}\\2\cdot\bm 1_{|B|\times|F|}&Q_{|B|}\end{bmatrix} \begin{bmatrix}x\\ \bm 0_{|B|}\end{bmatrix} + \begin{bmatrix}b_F^\top & b_B^\top \end{bmatrix} \begin{bmatrix}x\\ \bm 0_{|B|}\end{bmatrix}\\
\mathrm{subject\ to} \quad& x\leq \bm 0_{|F|}.
\end{align*}
This problem is equivalent to the following problem with the same form of Problem \ref{prob_3}, but smaller in size.
\begin{framed}
\begin{problem}
\label{prob_4}
\begin{align*}
\min_{x \in \mathbb{R}^{|F|}} \quad & \frac{1}{2}x^\top Q_{|F|}x +b_F^\top x\\
\mathrm{subject\ to} \quad& x\leq \bm 0_{|F|}.
\end{align*}
\end{problem}
\end{framed}

Note that the unconstrained optimizer $-Q_{|F|}^{-1}b_{F}$ of Problem 4 does not always satisfy the bound constraint. Now, let $x^\ast\in \R^{|F|}$ be an unconstrained optimizer of Problem 4.  If $x^\ast\leq \bm0_{|F|}$ holds, $x^\ast$ is the optimizer of Problem 4 and $x\in \R^d$ with $x_F=x^\ast$ and $x_B=\bm0_{|B|}$ is the optimizer of Problem 3. Otherwise, (if $x^\ast\leq \bm0_{|F|}$ does not hold,) from Theorem \ref{main_thm1}, we can again add the violating indices to the active set $B$, fix the active set variables to $0$ and reformulate a further reduced version of the problem. 

By repeating the above argument until the unconstrained optimizer satisfies the bound constraint, we can derive Algorithm \ref{alg_1}, which is an active-set type algorithm. The main procedure of Algorithm \ref{alg_1} is:
\begin{description}
\setlength{\leftskip}{1.0cm}
\item[Step 1.] Solve the unconstrained problem with the free variables.
\item[Step 2.] If there are no violations of the constraints, terminate.
\item[Step 3.] Add the indices of the variables that violate the constraints to the active set.
\item[Step 4.] Return to Step 1.
\end{description}

\stepcounter{algnum}
\begin{algorithm}[t]
\begin{algorithmic}[1]
\REQUIRE $d\in \mathbb{N}$, $b\in \mathbb{R}^d$ 
\STATE $x^\ast\leftarrow \bm 0_{d}$
\STATE $B_1 \leftarrow \emptyset,\ F_0\leftarrow \emptyset,\ F_1\leftarrow \{1,\dots, d\}$, $k\leftarrow1$
\WHILE{$|F_{k-1}|\neq |F_{k}|$}
\STATE $y \leftarrow -Q_{|F_k|}^{-1}b_{F_k}$
\STATE $B_{k+1}\leftarrow B_k,\ F_{k+1}\leftarrow F_k$
\FOR{$i\in F_k$}
\IF{$y_i>0$}
\STATE $B_{k+1}\leftarrow B_{k+1}\cup \{i\},\quad F_{k+1}\leftarrow F_{k+1}\setminus \{i\}$
\ENDIF
\ENDFOR
\STATE $k\leftarrow k+1$
\ENDWHILE
\STATE $x^\ast_{F_k}\leftarrow y$
\ENSURE $x^\ast$
\end{algorithmic}
\caption{Active Set Algorithm (for Problem \ref{prob_3})}
\label{alg_1}
\end{algorithm}

\begin{theorem}
\label{alg1_terminate}
Algorithm \ref{alg_1} terminates in finite steps and provides the global optimal solution to Problem \ref{prob_3}.
\end{theorem}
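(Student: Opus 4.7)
The strategy is to prove two statements separately: (i) the while loop in Algorithm~\ref{alg_1} terminates in at most $d+1$ iterations, and (ii) the returned $x^\ast$ is the unique minimizer of Problem~\ref{prob_3}. Part (i) is purely combinatorial: since line~8 only inserts into the active set, $(B_k)$ is monotonically non-decreasing and $(|F_k|)$ is non-increasing; the loop guard $|F_{k-1}|\neq|F_k|$ then forces each non-terminating iteration to strictly shrink $|F_k|$, and starting from $|F_1|=d$ at most $d$ strict decreases can occur before the guard fails.

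For part (ii), the plan is to maintain, as a loop invariant, the inclusion $B_k\subseteq B^\ast$, where $B^\ast:=\{i\in[d]:x^\ast_i=0\}$ and $x^\ast$ denotes the unique minimizer of Problem~\ref{prob_3} (existence and uniqueness follow from Lemma~\ref{lem_Qd_spd}). The base case $B_1=\emptyset$ is trivial. For the inductive step, suppose $B_k\subseteq B^\ast$; then $x^\ast_{B_k}=\bm 0_{|B_k|}$ and $x^\ast_{F_k}$ is feasible for the reduced Problem~\ref{prob_4} on $F_k$. A short argument -- extend any feasible $z\in\mathbb{R}^{|F_k|}$ of the reduced problem by zeros on $B_k$ to a feasible point $\tilde x$ of Problem~\ref{prob_3}, observe that the principal submatrix of $Q_d=2\mathrm{I}_d+2\mathrm{J}_d$ indexed by $F_k$ is exactly $Q_{|F_k|}$, and compare objective values with $x^\ast$ -- shows that $x^\ast_{F_k}$ is the unique minimizer of the reduced problem. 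Because the reduced problem has exactly the form of Problem~\ref{prob_3}, Theorem~\ref{main_thm1} applies to it and yields that every $i\in F_k$ with $(-Q_{|F_k|}^{-1}b_{F_k})_i>0$ satisfies $(x^\ast_{F_k})_i=0$, i.e., $i\in B^\ast$; consequently the indices added by lines~6--10 all belong to $B^\ast$ and the invariant is preserved.

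On termination at step $K$, the guard $|F_{K-1}|=|F_K|$, together with monotonicity $F_K\subseteq F_{K-1}$, gives $F_K=F_{K-1}$ and forces the last computed $y=-Q_{|F_K|}^{-1}b_{F_K}$ to have no positive entries. Hence $y$ is feasible and, being the unconstrained minimizer of a strictly convex quadratic, is also the unique minimizer of the reduced problem on $F_K$. The invariant $B_K\subseteq B^\ast$ makes $x^\ast_{F_K}$ another minimizer of that problem, so uniqueness forces $y=x^\ast_{F_K}$, and combined with $x^\ast_{B_K}=\bm 0_{|B_K|}$ the algorithm outputs exactly $x^\ast$. The main delicate step of this plan is the inductive reduction: one must carefully verify that pinning $x_{B_k}=\bm 0_{|B_k|}$ reduces Problem~\ref{prob_3} to an equivalent instance of the same form on $F_k$, so that Theorem~\ref{main_thm1} can be invoked recursively; this in turn rests on the fact that every principal submatrix of $2\mathrm{I}+2\mathrm{J}$ is again of the form $2\mathrm{I}+2\mathrm{J}$.
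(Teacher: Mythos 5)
Your proposal is correct and follows essentially the same route as the paper: termination via the monotone shrinkage of the free set, and correctness via the invariant that the free set always contains the optimal free set (equivalently $B_k\subseteq B^\ast$), justified by applying Theorem~\ref{main_thm1} recursively to the reduced Problem~\ref{prob_4}, whose form is preserved because principal submatrices of $2\mathrm{I}_d+2\mathrm{J}_d$ are again of that form. Your write-up makes explicit (the loop invariant, the equivalence of the reduced problem, and the final uniqueness argument) what the paper's proof leaves as "Theorem~\ref{main_thm1} and the following discussion," but there is no substantive difference in approach.
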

\begin{proof}
If there are no free variables left, the algorithm terminates returning $x=\bm 0_d$. The condition of the while loop remains true if and only if the number of free variables has changed. The number of free variables is non-increasing, and therefore, the while loop will terminate in finite, namely $d$, steps.

From Theorem \ref{main_thm1} and the following discussion, adding the violating indices to the active set and reducing the problem is justified, which indicates that the temporary free set $F_k$ is always a superset of the optimal free set. Since the unconstrained optimizer is always smaller or equal to the constrained optimizer, if the unconstrained optimizer satisfies the non-positivity constraints, the unconstrained optimizer is also the optimizer of the original constrained problem. Because the while loop terminates only if the unconstrained minimizer of the reduced problem does not violate the non-positivity constraint, Algorithm \ref{alg_1} returns the optimal solution to Problem \ref{prob_3}.
\end{proof}

From Theorems \ref{thm_grad_newton} and \ref{alg1_terminate}, the computational complexity of Algorithm \ref{alg_1} is given as follows.
\begin{corollary}\label{com_alg_1}
Algorithm \ref{alg_1} solves Problem \ref{prob_3} with $O(d_i^2)$ time. 
\end{corollary}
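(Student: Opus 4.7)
The plan is to combine two facts already established: the while-loop runs at most $d_i$ times (Theorem \ref{alg1_terminate}), and the dominant computation inside each iteration is cheap (Lemma \ref{thm_grad_newton}). This gives a product bound of the form (\# iterations) $\times$ (cost per iteration) $= d_i \cdot O(d_i) = O(d_i^2)$.

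More concretely, I would proceed as follows. First, recall from the proof of Theorem \ref{alg1_terminate} that the cardinality $|F_k|$ is non-increasing in $k$ and must strictly decrease whenever the while-loop condition stays true, so the loop executes at most $d_i$ times. Next, I analyze the cost of one iteration: line 4 computes $y = -Q_{|F_k|}^{-1} b_{F_k}$, which by Lemma \ref{thm_grad_newton} (applied with $d = |F_k| \leq d_i$) runs in $O(|F_k|) = O(d_i)$ time; the \textbf{for} loop on lines 6--10 performs a constant-time sign test and set update for each $i \in F_k$, hence also $O(|F_k|) = O(d_i)$; and the remaining bookkeeping operations (lines 5, 11) are $O(d_i)$. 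Therefore a single pass through the while-loop costs $O(d_i)$.

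Multiplying the two bounds yields the total running time $O(d_i) \cdot O(d_i) = O(d_i^2)$, which is exactly the claimed complexity. The initialization (lines 1--2) and the final assignment (line 13) are clearly $O(d_i)$ and do not affect the order. There is no real obstacle here: the only subtle point is noting that Lemma \ref{thm_grad_newton} applies uniformly to every restricted subproblem because $Q_{|F_k|}$ has exactly the same structural form $2\mathrm{I} + 2\mathrm{J}$ as $Q_d$, so the closed-form inverse from \eqref{Qd_inv} is available at every iteration.
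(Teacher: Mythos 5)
Your proposal is correct and follows exactly the same route as the paper's own proof: bound the number of while-loop iterations by $d_i$ via Theorem \ref{alg1_terminate} and the per-iteration cost by $O(d_i)$ via Lemma \ref{thm_grad_newton}, then multiply. The extra care you take in noting that $Q_{|F_k|}$ retains the form $2\mathrm{I}+2\mathrm{J}$ at every iteration is a useful clarification but not a departure from the paper's argument.
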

\begin{proof}
From Lemma \ref{thm_grad_newton}, each iteration of the while loop costs $O(d_i)$ time. From the proof of Theorem \ref{alg1_terminate}, the while loop repeats $d_i$ times at most. Hence, the computational cost of Problem \ref{prob_3} is $O(d_i^2)$. 
\end{proof}

Although the output $x^\ast$ in Algorithm \ref{alg_1} corresponds to the essential parts of the optimal graph Laplacian, it is not a solution for Problem \ref{prob_1}.
Therefore,
we present Algorithm \ref{alg_1_full} as the complete version for solving the nearest graph Laplacian problem (Problem \ref{prob_1}), taking input $A$ and producing output $L^\ast$.
The following corollary is a consequence of Corollary \ref{com_alg_1}.

\begin{corollary}\label{com_alg_1_full}
Algorithm \ref{alg_1_full} solves Problem \ref{prob_1} with $O(\sum_{i=1}^n d_i^2)$ time.
\end{corollary}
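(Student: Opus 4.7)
The plan is to lift Corollary \ref{com_alg_1}, which bounds the cost of solving a single one-block problem (Problem \ref{prob_3}) by $O(d_i^2)$, to a bound on the cost of the full algorithm by exploiting the block-diagonal structure of $Q$ derived in \eqref{def_Q}. Recall that Problem \ref{prob_2} decomposes into $n$ independent one-block problems indexed by the rows $i = 1,\dots,n$, because $Q$ is block diagonal with blocks $Q_{d_1},\dots,Q_{d_n}$ and the non-positivity constraint is separable across the blocks. Algorithm \ref{alg_1_full} is built on this decomposition: it loops over the rows, constructs the data $b_{(i)}$ for row $i$, calls Algorithm \ref{alg_1} on the row-$i$ subproblem, and assembles the resulting non-diagonal entries of $L^\ast$ together with the forced diagonal entry $L_{ii}^\ast = -\sum_{j\in \mathcal{N}(i)} L_{ij}^\ast$.

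First I would account for the per-row preprocessing: building $b_{(i)}$ from $A$ requires reading the $d_i$ entries $A_{ij}$ with $j \in \mathcal{N}(i)$ and the diagonal $A_{ii}$ and forming $2A_{ii}-2A_{ij}$, which costs $O(d_i)$. Next, by Corollary \ref{com_alg_1}, running Algorithm \ref{alg_1} on the row-$i$ subproblem produces the optimal $x^{\ast} \in \mathbb{R}^{d_i}$ in $O(d_i^2)$ time. Finally, writing the $d_i$ non-diagonal entries into $L^\ast$ and computing the diagonal entry by a single sum costs another $O(d_i)$. Summed over rows, the total cost is
\begin{equation*}
\sum_{i=1}^n \bigl(O(d_i) + O(d_i^2) + O(d_i)\bigr) = O\!\left(\sum_{i=1}^n d_i^2\right),
\end{equation*}
since the quadratic term dominates the linear terms.

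The only slightly delicate point is to confirm that concatenating the block-wise optimizers actually yields an optimizer of Problem \ref{prob_1}. This follows immediately from the separability: the objective of Problem \ref{prob_2} equals $\|A\|_{\mathrm{F}}^2$ plus a sum of independent quadratics in disjoint variable groups, and the feasible set is a Cartesian product of the single-block feasible sets $C_{d_i}$ from \eqref{C_d}; hence minimization decouples exactly across blocks, and the correctness is already guaranteed row-wise by Theorem \ref{alg1_terminate}. There is no real obstacle here; the corollary is essentially a bookkeeping consequence of Corollary \ref{com_alg_1} together with the decomposition, so the proof is short.
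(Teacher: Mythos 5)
Your proposal is correct and follows exactly the route the paper intends: the paper states this corollary as a direct consequence of Corollary \ref{com_alg_1}, relying on the same row-wise decomposition of Problem \ref{prob_1} into $n$ independent one-block problems and summing the $O(d_i^2)$ bounds. Your added bookkeeping for the $O(d_i)$ preprocessing and assembly steps, and the remark on separability, merely make explicit what the paper leaves implicit.
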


\begin{algorithm}[t]
\begin{algorithmic}[1]
\REQUIRE $A\in \mathbb{R}^{n\times n}$, neighbor set of each node: $\mathcal{N}(i)~(i=1\dots,n)$ 
\STATE $L^\ast\leftarrow \bm 0_{d\times d}$
\FOR{$i=1,\dots,n$}
\STATE $d\leftarrow |\mathcal{N}(i)|$, $B_1 \leftarrow \emptyset,\ F_0\leftarrow \emptyset,\ F_1\leftarrow \{1,\dots, d\}$, $k\leftarrow1$
\STATE $x\leftarrow \bm{0}_d$, $b\leftarrow \begin{bmatrix}2A_{ii}-2A_{iN(i)_1},\dots, 2A_{ii}-2A_{iN(i)_{d}}\end{bmatrix}^\top$
\WHILE{$|F_{k-1}|\neq |F_{k}|$}
\STATE $y \leftarrow -Q_{|F_k|}^{-1}b_{F_k}$
\STATE $B_{k+1}\leftarrow B_k,\ F_{k+1}\leftarrow F_k$
\FOR{$i\in F_k$}
\IF{$y_i>0$}
\STATE $B_{k+1}\leftarrow B_{k+1}\cup \{i\}$,\quad $F_{k+1}\leftarrow F_{k+1}\setminus \{i\}$
\ENDIF
\ENDFOR
\STATE $k\leftarrow k+1$
\ENDWHILE
\STATE $x_{F_k}\leftarrow y$
\STATE $L^\ast_{i\mathcal{N}(i)_k}\leftarrow x_k$ ($k=1,\dots,d$)
\STATE $L^\ast_{ii}\leftarrow -\bm 1^\top x$
\ENDFOR
\ENSURE $L^\ast$
\end{algorithmic}
\caption{Active Set Algorithm (for Problem \ref{prob_1})}
\label{alg_1_full}
\end{algorithm}

In Appendix A, we show that there exists an instance that achieves the bound $O(d_i^2)$ derived in Corollary \ref{com_alg_1}. However, as demonstrated in Section \ref{Exp}, Algorithm \ref{alg_1_full} terminates in a small number of iterations and works well in practice.

\begin{remark}
As mentioned already, in Theorem 3.4 of \cite{Kunisch2003-jz}, Kunisch and Rendl showed that the update of the active set is at most $2^d$ times in a general setting: minimize $\frac{1}{2}x^\top Qx+b^\top x$ subject to $x\leq c$ ($Q\in \R^{d\times d}$ is a positive definite matrix, $b,c\in\R^d$).
However, we showed in Theorem \ref{alg1_terminate} that for our specific $Q$, the active set increases monotonically (see step 8 in Algorithm \ref{alg_1}) and stops within $d$ times. This monotonicity was proved based on Theorem \ref{main_thm1} by exploiting the special structure of our specific $Q$, namely equation (\ref{eq_g}). We also showed that, due to the simplicity of $Q$, the matrix-inverse vector multiplication $Q^{-1}b$ can be calculated in $O(d)$ time, which suggests that our active set approach is highly efficient for our problem.
\end{remark}

\subsection{Proposed Algorithm 2} \label{Sec_Alg2}
In this section, we derive another algorithm that computes the optimal graph Laplacian, with the improved computational complexity, under the assumption that 
\begin{align}
b_1\geq b_2\geq\dots\geq b_d \label{assumption_b}    
\end{align}
 without loss of generality. The algorithm directly computes the KKT point of Problem \ref{prob_3}. We derive the explicit solution of the KKT conditions \cite{Boyd2004-qd}, and provide a simple algorithm to compute the KKT point.

\begin{definition}[KKT point] \label{Def_KKT}
The KKT points of Problem \ref{prob_3} are defined as the points $(x^\ast,\lambda^\ast)\in \mathbb{R}^d\times \mathbb{R}^d$ that satisfy the following KKT-conditions:
\begin{align}
\label{KKT-grad}Q_dx^\ast +b+\lambda^\ast&=\bm 0,\\
\label{KKT-x}x^\ast &\leq \bm 0,\\
\label{KKT-l}\lambda^\ast &\geq \bm 0,\\
\label{KKT-comp}x^{\ast\top}\lambda^\ast &= 0.
\end{align}
\end{definition}

The KKT conditions are necessary and sufficient for $x^*$ to be the minimizer, since Problem \ref{prob_3} is a convex quadratic programming problem \cite{nocedal2006numerical}.

To introduce another algorithm for solving Problem \ref{prob_3}, we prove the following lemma to characterize the KKT point. 
\begin{lemma}
\label{k_lem}
Define the cumulative sum of $b$ as
\begin{align*}
S_i:=\sum_{j=1}^i b_j\quad(i=1,\dots,d)
\end{align*}
and
\begin{align*}
b_0:=+\infty,\ S_0:=0,\ b_{d+1}:=-\infty,\ S_{d+1}:=S_d.    
\end{align*}
Then, there exists exactly one index $k\in \{0,1,\dots,d\}$ such that
\begin{align}
\label{ineq_bk1}
b_k&\geq \frac{S_k}{k+1},\\
\label{ineq_bk2}
b_{k+1}&<\frac{S_{k+1}}{(k+1)+1}.
\end{align}
In other words, there is a unique $k\in \{0,1,\dots,d\}$ that simultaneously satisfies \eqref{ineq_bk1} and \eqref{ineq_bk2}.
\end{lemma}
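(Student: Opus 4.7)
The plan is to rewrite both conditions in terms of the ``shifted averages'' $T_k := S_k/(k+1)$ and then exploit the monotonicity afforded by the assumption $b_1 \geq b_2 \geq \cdots \geq b_d$. Using $S_{k+1} = S_k + b_{k+1}$, a short algebraic manipulation yields the identity
\begin{equation*}
T_{k+1} - T_k \;=\; \frac{b_{k+1} - T_k}{k+2},
\end{equation*}
so $b_{k+1} < T_k$ is equivalent to $T_{k+1} < T_k$, which is in turn equivalent to $b_{k+1} < T_{k+1}$. Applied to \eqref{ineq_bk2}, this shows the condition $b_{k+1} < S_{k+1}/(k+2)$ is equivalent to $b_{k+1} < T_k$; an analogous manipulation shows that, for $k \geq 1$, condition \eqref{ineq_bk1} is equivalent to $T_k \geq T_{k-1}$.

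For existence, I would let $k^\star$ be the smallest element of $\{0, 1, \ldots, d\}$ for which $b_{k+1} < S_{k+1}/(k+2)$; this set is nonempty because of the convention $b_{d+1} = -\infty$. By the minimality of $k^\star$, for every $k < k^\star$ one has $b_{k+1} \geq T_k$, hence $T_{k+1} \geq T_k$, so the finite sequence $T_0, T_1, \ldots, T_{k^\star}$ is non-decreasing. This immediately gives $T_{k^\star} \geq T_{k^\star - 1}$, which is \eqref{ineq_bk1} at $k^\star$; the boundary case $k^\star = 0$ is handled directly by $b_0 = +\infty \geq 0 = S_0$.

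For uniqueness, the key step is a one-step propagation: if $b_{k+1} < T_k$ at some $k$, then using $b_{k+2} \leq b_{k+1}$ (from the ordering assumption) together with $(k+1) b_{k+1} < S_k$, I obtain
\begin{equation*}
(k+2)\, b_{k+2} \;\leq\; (k+1) b_{k+1} + b_{k+1} \;<\; S_k + b_{k+1} \;=\; (k+2)\, T_{k+1},
\end{equation*}
yielding $b_{k+2} < T_{k+1}$. Iterating this implication, $T_{j+1} < T_j$ for all $j \geq k^\star$, so \eqref{ineq_bk1} (equivalently $T_k \geq T_{k-1}$) fails for every $k > k^\star$, establishing uniqueness.

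The main delicate point is ensuring that the boundary conventions $b_0 = +\infty$, $b_{d+1} = -\infty$, $S_0 = 0$, and $S_{d+1} = S_d$ make the extreme cases $k^\star = 0$ and $k^\star = d$ behave uniformly: at $k^\star = 0$ the reformulation of \eqref{ineq_bk1} as $T_k \geq T_{k-1}$ is not directly available and that case must be verified by hand from $b_0 = +\infty$, while at $k^\star = d$ the induction terminates at the upper end of the index range. Beyond this bookkeeping, the crux of the argument is the propagation step displayed above, which is precisely where the hypothesis that $(b_j)$ is non-increasing is used.
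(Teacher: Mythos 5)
Your proof is correct and follows essentially the same route as the paper's: existence via the first index at which the ``false'' condition appears (anchored by the conventions $b_0=+\infty$, $b_{d+1}=-\infty$), and uniqueness via a one-step propagation showing that once $b_{k+1}<S_{k+1}/(k+2)$ holds it persists, using $b_{k+2}\leq b_{k+1}$. Your identity $T_{k+1}-T_k=(b_{k+1}-T_k)/(k+2)$ is just an algebraic restatement of the paper's observation that $S_{k+1}/(k+2)$ is an average of $b_{k+1}$ and $k+1$ copies of $S_k/(k+1)$, so the two arguments coincide in substance.
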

\begin{proof}
The inequalities
$b_0=+\infty \geq 0=\frac{S_0}{0+1}$ and 
$b_{d+1}=-\infty < \frac{S_{d+1}}{d+2}$
hold. Thus, there exists at least one index $k$ that satisfy \eqref{ineq_bk1} and \eqref{ineq_bk2}, because we have assumed \eqref{assumption_b}. 
Let $k_0$ be the first $k$ to satisfy the inequalities, i.e., for $k=0,\ldots, k_0-1$, the inequalities \eqref{ineq_bk1} and \eqref{ineq_bk2} are not satisfied simultaneously.

We show that $k_0$ is the only index that simultaneously satisfies \eqref{ineq_bk1} and \eqref{ineq_bk2}.
For any $k\in \{k_0+2,\dots,d\}$, $S_k/(k+1)$ can be seen as the average value of $k+1$ real numbers $\{b_k,\ S_{k-1}/k, \dots, \ S_{k-1}/k\}$. Assume that
\begin{equation}
b_{k-1} < \frac{S_{k-1}}{k}\label{asump}
\end{equation}
holds.
Then, $b_k\leq b_{k-1}<S_{k-1}/k$ and the average of $\{b_k,\ S_{k-1}/k, \dots, \ S_{k-1}/k\}$ is strictly larger than $b_k$. 
Therefore, we can derive
$b_k<\frac{S_k}{k+1}$.
By the definition of $k_0$, inequality (\ref{asump}) holds with $k=k_0+2$, and the proof is complete by induction.
\end{proof}
\begin{theorem}
\label{thm_KKT_sol}
Let $k_0$ be the index that satisfies \eqref{ineq_bk1} and \eqref{ineq_bk2} in Lemma \ref{k_lem}. The following $(x^\ast,\lambda^\ast)$ is the KKT-point of Problem \ref{prob_3}.
\begin{align}
\label{def_xast}
x^\ast_i&:=\left\{\begin{array}{ll}\frac{1}{2}\left(\frac{S_{k_0}}{1+k_0}-b_i\right)&(i=1,\dots,k_0)\\0&(i=k_0+1,\dots,d),\end{array}\right.
\\
\label{def_last}
\lambda^\ast_i&:=\left\{\begin{array}{ll}0&(i=1,\dots,k_0)\\\frac{S_{k_0}}{1+k_0}-b_{i}&(i=k_0+1,\dots,d).\end{array}\right.
\end{align}
\end{theorem}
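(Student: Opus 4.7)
My plan is to verify the four KKT conditions \eqref{KKT-grad}--\eqref{KKT-comp} one at a time for the candidate pair $(x^\ast,\lambda^\ast)$ defined by \eqref{def_xast} and \eqref{def_last}. The complementary-slackness condition \eqref{KKT-comp} is immediate from the definitions, since for every index $i$ either $x_i^\ast=0$ (when $i>k_0$) or $\lambda_i^\ast=0$ (when $i\le k_0$), so the product $x_i^\ast\lambda_i^\ast$ vanishes entry-wise.

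For primal feasibility \eqref{KKT-x}, I would combine the monotonicity assumption \eqref{assumption_b} with inequality \eqref{ineq_bk1}: for $i\le k_0$ we have $b_i\ge b_{k_0}\ge \frac{S_{k_0}}{1+k_0}$, hence $x_i^\ast=\tfrac12(\tfrac{S_{k_0}}{1+k_0}-b_i)\le 0$; the remaining entries are $0$ by construction. For dual feasibility \eqref{KKT-l}, the key algebraic step is to rewrite \eqref{ineq_bk2}, $b_{k_0+1}<\tfrac{S_{k_0+1}}{k_0+2}=\tfrac{S_{k_0}+b_{k_0+1}}{k_0+2}$, as $(k_0+1)b_{k_0+1}<S_{k_0}$, i.e., $b_{k_0+1}<\tfrac{S_{k_0}}{k_0+1}$. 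Using \eqref{assumption_b} once more, $b_i\le b_{k_0+1}<\tfrac{S_{k_0}}{k_0+1}$ for $i\ge k_0+1$, so $\lambda_i^\ast=\tfrac{S_{k_0}}{1+k_0}-b_i\ge 0$; the remaining entries are $0$. (The $k_0=d$ and $k_0=0$ boundary cases need a brief check using the conventions $b_0=+\infty,\ b_{d+1}=-\infty$, but in both they reduce to trivial statements.)

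The only condition requiring real computation is the stationarity equation \eqref{KKT-grad}. Here I would exploit the structure $Q_d=2\mathrm{I}_d+2\mathrm{J}_d$, so that $(Q_dx^\ast)_i=2x_i^\ast+2T$ where $T:=\bm1_d^\top x^\ast=\sum_{j=1}^{k_0}x_j^\ast$. A direct calculation yields
\begin{equation*}
T=\tfrac12\Bigl(k_0\cdot\tfrac{S_{k_0}}{1+k_0}-S_{k_0}\Bigr)=-\tfrac{S_{k_0}}{2(k_0+1)},
\end{equation*}
so $2T=-\tfrac{S_{k_0}}{k_0+1}$. Then for $i\le k_0$, $(Q_dx^\ast)_i+b_i+\lambda_i^\ast=2x_i^\ast+2T+b_i= \bigl(\tfrac{S_{k_0}}{1+k_0}-b_i\bigr)-\tfrac{S_{k_0}}{k_0+1}+b_i=0$, and for $i\ge k_0+1$, $(Q_dx^\ast)_i+b_i+\lambda_i^\ast = 2T+b_i+\tfrac{S_{k_0}}{1+k_0}-b_i=0$. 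This verifies \eqref{KKT-grad}.

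The main (and only real) obstacle is recognizing that \eqref{ineq_bk2} is equivalent to $b_{k_0+1}<\tfrac{S_{k_0}}{k_0+1}$; once this substitution is made, both feasibility bounds and the stationarity equation follow from one-line arithmetic using the block-plus-rank-one structure of $Q_d$. Since Problem~\ref{prob_3} is a convex quadratic program, the KKT conditions are sufficient (as noted just after Definition~\ref{Def_KKT}), so the produced $(x^\ast,\lambda^\ast)$ is indeed the primal-dual optimal pair.
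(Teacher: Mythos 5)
Your proposal is correct and follows essentially the same route as the paper's proof: complementarity by inspection, primal and dual feasibility from the monotonicity of $b$ together with \eqref{ineq_bk1} and the rewriting of \eqref{ineq_bk2} as $b_{k_0+1}<\tfrac{S_{k_0}}{k_0+1}$, and stationarity from the rank-one structure of $Q_d$. The only difference is that you spell out the "direct calculation" for \eqref{KKT-grad} that the paper leaves implicit, and you derive the key inequality algebraically where the paper uses an averaging argument; both are equivalent.
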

\begin{proof}
It is obvious that the complementary condition \eqref{KKT-comp} holds. 
By a direct calculation, we can confirm that
\eqref{KKT-grad} holds.
From the definition of $k_0$, we have
\begin{align}
x_{k_0}^\ast = \frac{1}{2}\left(\frac{S_{k_0}}{1+k_0}-b_{k_0}\right)\leq 0. \label{x_k_0}    
\end{align}
The assumption \eqref{assumption_b}, the definition \eqref{def_xast}, and \eqref{x_k_0} yield $x_{1}^\ast\leq \cdots\leq x_{k_0}^\ast\leq 0$. 
Thus, \eqref{KKT-x} holds.

To show (\ref{KKT-l}), it suffices to prove 
\begin{align}
\lambda_{k_0+1}^\ast \geq 0, \label{lambda_ineq}    
\end{align}
since 
$\lambda_{k_0+1}^\ast \leq \cdots \leq \lambda_{d}^\ast$
due to the assumption \eqref{assumption_b} and the definition \eqref{def_last}.
By the definition of $k_0$, 
\begin{equation}
\label{ineq_k01}
b_{k_0+1}<\frac{S_{k_0+1}}{k_0+2}.
\end{equation}
The right-hand side of (\ref{ineq_k01}) can be seen as the average of $k_0+2$ real numbers $\{b_{k_0+1}, S_{k_0}/(k_0+1),\dots,S_{k_0}/(k_0+1)\}$. Therefore, (\ref{ineq_k01}) implies
$b_{k_0+1}<\frac{S_{k_0}}{k_0+1}$,
and \eqref{lambda_ineq} holds.
\end{proof}

It is worth noting that $x^\ast$ defined in (\ref{def_xast}) is the unconstrained minimizer of Problem \ref{prob_4} with the free variable set $F=\{1,\dots,k_0\}$ and the active variable set $B=\{k_0+1,\dots,d\}$. That is, 
\begin{align}\label{sort_to_F}
x^\ast_F&=-Q_{|F|}^{-1}b_{F} = -\frac{1}{2}\left(\mathrm{I}_{k_0}-\frac{1}{1+k_0}\mathrm{J}_{k_0}\right)b_{F},\\
x^\ast_B&=\bm 0_{|B|}.\label{sort_to_B}
\end{align}
Here, we used (\ref{Qd_inv}) for the inverse matrix. This fact indicates that if the assumption \eqref{assumption_b} holds, Algorithm \ref{alg_1} will terminate with the free set $F={1,\dots,k_0}$ and the active set $B={k_0+1,\dots,d}$. For a general unsorted vector $b$ for Problem \ref{prob_3}, the assumption \eqref{assumption_b} can be satisfied by sorting the elements of $b$.

Now, since $x^\ast$ in Theorem \ref{thm_KKT_sol} is the global optimal solution of Problem \ref{prob_3}, we provide Algorithm \ref{alg_2} as a method to compute $x^\ast$. In Algorithm \ref{alg_2}, we first sort the given vector $b$. Here, $I\leftarrow$argsort\_descend$(b)$ represents the order of indices when the elements of $b$ are sorted in descending order, resulting in $b_{I_1}\geq b_{I_2}\geq\cdots\geq b_{I_d}$. The for loop is designed to to find the index $k_0$ used in Theorem \ref{thm_KKT_sol}. In the final step, we calculate the optimal solution $x^\ast$ with $F=\{I_1,\dots, I_{k_0}\}$, using \eqref{sort_to_F} and \eqref{sort_to_B}.

\stepcounter{algnum}
\begin{algorithm}[t]
\begin{algorithmic}[1]
\REQUIRE $d\in \mathbb{N}$, $b\in \mathbb{R}^d$
\STATE $S\leftarrow 0, F\leftarrow \emptyset$
\STATE $x^\ast\leftarrow \bm 0_{d}$, $I\leftarrow \mathrm{argsort\_descend}(b)$
\FOR{$i=1,\dots,d$}
\STATE $S\leftarrow S+b_{I_i}$
\IF{$b_{I_i}<S/(i+1)$}
\BREAK
\ENDIF
\STATE $F\leftarrow F\cup\{I_i\}$
\ENDFOR
\STATE $x^\ast_F \leftarrow -Q_{|F|}^{-1}b_F$
\ENSURE $x^\ast$
\end{algorithmic}
\caption{Calculate KKT Point by Sorting (for Problem \ref{prob_3})}
\label{alg_2}
\end{algorithm}

The computational complexity of Algorithm \ref{alg_2} is given as follows.
\begin{corollary}\label{com_alg_2}
Algorithm \ref{alg_2} solves Problem \ref{prob_3} with $O(d_i\log{d_i})$ time. 
\end{corollary}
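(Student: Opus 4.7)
The plan is to decompose Algorithm \ref{alg_2} into its three elementary stages and bound the cost of each. First, the sorting step $I\leftarrow\mathrm{argsort\_descend}(b)$ takes $O(d_i\log d_i)$ time using any standard comparison-based sort. Second, the for loop iterates at most $d_i$ times, and each iteration performs a constant number of arithmetic operations: one update of the running cumulative sum $S$ (maintained incrementally so that no $O(d_i)$ recomputation is needed), one comparison against $S/(i+1)$, and at most one insertion into $F$. Thus the loop contributes $O(d_i)$ overall.

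For the final line, $x^\ast_F\leftarrow -Q_{|F|}^{-1}b_F$, I would invoke Lemma \ref{thm_grad_newton}: because the closed form \eqref{Qd_inv} expresses $Q_{|F|}^{-1}$ as $\tfrac{1}{2}\mathrm{I}_{|F|}$ plus a rank-one correction, the product $-Q_{|F|}^{-1}b_F$ reduces to one summation of the entries of $b_F$ followed by an entrywise affine update, which costs $O(|F|)=O(d_i)$. Adding the three contributions gives $O(d_i\log d_i)+O(d_i)+O(d_i)=O(d_i\log d_i)$, as claimed.

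There is no real obstacle here; the argument is a routine complexity audit. The only points that warrant explicit mention in the writeup are that $S$ is maintained incrementally (so the loop body is truly $O(1)$), and that the inverse–vector multiply is handled by the explicit formula from Lemma \ref{thm_grad_newton} rather than by a generic linear solve, which would otherwise dominate the sort. With these two observations in place, the sorting step is identified as the bottleneck and the bound follows immediately.
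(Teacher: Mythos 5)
Your proposal is correct and follows the same route as the paper's proof: identify sorting as the $O(d_i\log d_i)$ bottleneck and observe that everything else runs in linear time. The paper states the linear-time claim for the remaining steps without elaboration, whereas you justify it explicitly via the incremental cumulative sum and the closed-form rank-one inverse from Lemma~\ref{thm_grad_newton}; this is a welcome but not substantively different level of detail.
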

\begin{proof}
The time complexity of sorting a size-$d_i$ vector is $O(d_i\log{d_i})$. The rest part of Algorithm \ref{alg_2} can be calculated in linear time of $d_i$.
\end{proof}

Although the output $x^\ast$ in Algorithm \ref{alg_2} corresponds to the essential parts of the optimal graph Laplacian, it is not a solution for Problem \ref{prob_1}.
Therefore,
 we present Algorithm \ref{alg_2_full} as the complete version for solving Problem \ref{prob_1}. 
Additionally, we provide the following result, which is a direct consequence of Corollary \ref{com_alg_2}.

\begin{corollary}\label{com_alg_2_full}
Algorithm \ref{alg_2_full} solves Problem \ref{prob_1} with $O(\sum_{i=1}^n d_i\log{d_i})$ time.
\end{corollary}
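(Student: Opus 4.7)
The plan is to lift the per-row complexity bound of Corollary \ref{com_alg_2} to the full-matrix problem by exploiting the block-diagonal decomposition of Problem \ref{prob_2} into $n$ independent one-block Problems \ref{prob_3}. Since Algorithm \ref{alg_2_full} is simply the outer loop that runs Algorithm \ref{alg_2} once for each row $i=1,\dots,n$ and then assembles the result, the argument is essentially additive over rows.

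First I would recall from Section \ref{Sec_pro_formulation} that, because $Q$ in \eqref{def_Q} is block diagonal with blocks $Q_{d_1},\dots,Q_{d_n}$ and the constraint $x\leq \bm0_m$ separates coordinate-wise, Problem \ref{prob_2} decouples into the $n$ one-block Problems \ref{prob_3}. Solving Problem \ref{prob_1} therefore reduces to solving these $n$ subproblems plus the bookkeeping of (i) forming each $b_{(i)}$ from the entries of $A$, (ii) writing the returned $x^\ast$ back into the off-diagonal positions of $L^\ast$ indexed by $\mathcal{N}(i)$, and (iii) setting the diagonal entry via $L^\ast_{ii}\leftarrow -\bm 1^\top x^\ast$ to enforce the row-sum-zero constraint \eqref{row_sum_zero}. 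Each of these bookkeeping operations for row $i$ is clearly $O(d_i)$.

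Next I would apply Corollary \ref{com_alg_2} to bound the $i$-th call to Algorithm \ref{alg_2} by $O(d_i\log d_i)$. Combining the per-row solver cost with the $O(d_i)$ bookkeeping cost gives an $O(d_i\log d_i)$ cost for the $i$-th iteration of the outer loop. Summing over $i=1,\dots,n$ yields the claimed bound
\[
\sum_{i=1}^n O(d_i\log d_i) = O\!\left(\sum_{i=1}^n d_i \log d_i\right).
\]

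I do not expect any serious obstacle: the only subtlety is making clear that the outer loop of Algorithm \ref{alg_2_full} performs only $O(d_i)$ additional work per row beyond the solver call, so that the $d_i\log d_i$ term dominates and no extra factor appears. The decomposition itself is immediate from the block structure of $Q$ established in Section \ref{Sec_pro_formulation}, and the per-row complexity is already given by Corollary \ref{com_alg_2}, so the corollary follows directly.
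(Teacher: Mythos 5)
Your proposal is correct and matches the paper's reasoning: the paper states this corollary as a direct consequence of Corollary \ref{com_alg_2}, i.e.\ the row-wise decomposition of Problem \ref{prob_1} into $n$ instances of Problem \ref{prob_3} plus summing the per-row $O(d_i\log d_i)$ bounds, which is exactly your argument. Your additional remark that the bookkeeping per row is only $O(d_i)$ is a fine (and accurate) elaboration of what the paper leaves implicit.
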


\begin{algorithm}[t]
\begin{algorithmic}[1]
\REQUIRE $A\in \mathbb{R}^{n\times n}$, neighbor set of each node: $\mathcal{N}(i)~(i=1,\dots,n)$
\STATE $L^\ast \leftarrow \bm 0_{n\times n}$
\FOR{$i=1,\dots,n$}
\STATE $S\leftarrow 0$, $F\leftarrow \emptyset$, $d\leftarrow |\mathcal{N}(i)|$
\STATE $x\leftarrow \bm 0_{d}$, $b\leftarrow \begin{bmatrix}2A_{ii}-2A_{i\mathcal{N}(i)_1},\dots, 2A_{ii}-2A_{i\mathcal{N}(i)_{d}}\end{bmatrix}^\top$
\STATE $I\leftarrow \mathrm{argsort\_descend}(b)$
\FOR{$j=1,\dots,d$}
\STATE $S\leftarrow S+b_{I_{j}}$
\IF{$b_{I_j}<S/(j+1)$}
\BREAK
\ENDIF
\STATE $F\leftarrow F\cup\{I_j\}$
\ENDFOR
\STATE $x_F \leftarrow -Q_{|F|}^{-1}b_F$
\STATE $L^\ast_{i\mathcal{N}(i)_k}\leftarrow x_k$ ($k=1,\dots,d$)
\STATE $L^\ast_{ii}\leftarrow -\bm 1^\top x$
\ENDFOR
\ENSURE $L^\ast$
\end{algorithmic}
\caption{Calculate KKT Point by Sorting (for Problem \ref{prob_1})}
\label{alg_2_full}
\end{algorithm}


\subsection{Existing Algorithms} \label{Sec_existing}
In this section, we explain two existing optimization algorithms for solving Problem \eqref{prob_3}. In Section \ref{Exp}, we numerically compare these algorithms with the proposed algorithms.

\subsubsection{Interior Point Method}

Problem \eqref{prob_3} can be solved using an interior point method, because the problem is equivalent to
\begin{framed}
\begin{problem}
\label{prob_IP}
\begin{equation*}
\begin{aligned}
\min_{\tilde{x} \in \mathbb{R}^{d}} \quad & \frac{1}{2}\tilde{x}^\top Q_{d} \tilde{x} - b^\top \tilde{x}\\
\mathrm{subject\ to} \quad& \tilde{x}\geq \bm 0_{d}.
\end{aligned}
\end{equation*}
\end{problem}
\end{framed}

For Problem \ref{prob_IP}, which is a special case described in \cite[Section 16.6]{nocedal2006numerical},
the perturbed KKT conditions are given by
\begin{align}
    F(\tilde{x},\lambda):= \begin{bmatrix}
        Q\tilde{x}-\lambda -b \\
        \tilde{X}\Lambda {\bf 1}_d - \sigma \mu {\bf 1}_d
    \end{bmatrix} = 0, \label{perturbed_KKT}
\end{align}
where $\tilde{X}:={\rm diag}\{\tilde{x}_1,\ldots, \tilde{x}_d\}$, $\Lambda:={\rm diag}\{\lambda_1,\ldots, \lambda_d\}$, and $\sigma\in (0,1)$.
By fixing the value of $\mu$ and applying Newton's method to the perturbed KKT conditions \eqref{perturbed_KKT}, we obtain
\begin{align}
    \begin{bmatrix}
        Q & -\mathrm{I}_d \\
        \Lambda & \tilde{X}
    \end{bmatrix}
    \begin{bmatrix}
        \Delta \tilde{x} \\
        \Delta \lambda
    \end{bmatrix}
    =
    \begin{bmatrix}
        0 \\
        -\tilde{X}\Lambda {\bf 1}_d + \sigma \mu {\bf 1}_d
    \end{bmatrix}. \label{Newton_eq}
\end{align}
Here, we assume that the pair $(\tilde{x},\lambda)$ represents a primal-dual strictly feasible point, meaning that $Q\tilde{x}-\lambda-b=0$, $\tilde{x}>0$, and $\lambda>0$.
For instance, a primal-dual strictly feasible point can be given by the following expressions:
\begin{align}
    \tilde{x} = {\rm abs}(b) + {\bf 1}_d,\quad \lambda = Q\tilde{x}-b, \label{primal-dual_feasible}
\end{align}
where ${\rm abs}(b)$ denotes the vector whose elements are the absolute values of the corresponding elements of $b$.

From Newton equation \eqref{Newton_eq}, we have
\begin{align}
     \Delta\lambda &= Q\Delta \tilde{x}, \label{Newton_eq2} =2\Delta \tilde{x} +2\left(\sum_{i=1}^d \Delta \tilde{x}_i\right){\bf 1}_d,\\
     (\Lambda +\tilde{X}Q)\Delta \tilde{x} &=  -\tilde{X}\Lambda {\bf 1}_d + \sigma \mu {\bf 1}_d. \label{Newton_eq3}
\end{align}
Multiplying the left side of \eqref{Newton_eq3} by $\tilde{X}^{-1}$, we obtain
\begin{align}
    (Q+D)\Delta \tilde{x} = t, \label{Delta_x}
\end{align}
where
\begin{align}
    D:= \tilde{X}^{-1}\Lambda,\quad t:= -\Lambda {\bf 1}_d + \sigma \mu \tilde{X}^{-1} {\bf 1}_d.
\end{align}
From \eqref{Newton_eq2},
$\Delta \lambda$ can be calculated in $O(d)$ time.
Moreover,
we can calculate $\Delta \tilde{x}$ in $O(d)$ time using Proposition \ref{Sherman-Morrison}
to \eqref{Delta_x}.
In fact, Proposition \ref{Sherman-Morrison} and \eqref{def_Q} yield
\begin{align*}
    (Q+D)^{-1} = ( 2 \mathrm{I}_d +D)^{-1} -\frac{2}{1+{\bf 1}_d^\top ( 2 \mathrm{I}_d +D)^{-1}{\bf 1}_d}( 2 \mathrm{I}_d +D)^{-1}{\bf 1}_d{\bf 1}_d^\top ( 2 \mathrm{I}_d +D)^{-1}.
\end{align*}
Because $2 \mathrm{I}_d +D$ is a diagonal matrix whose diagonal elements are positive, 
the computations of $( 2 \mathrm{I}_d +D)^{-1} t$ and $( 2 \mathrm{I}_d +D)^{-1} {\bf 1}_d$ are finished in $O(d)$ time.

Algorithm \ref{alg_IP} describes an interior point method for solving Problem \ref{prob_3},
where, for example, a primal-dual strictly feasible point $(\tilde{x}^0, \lambda^0)$ is given by \eqref{primal-dual_feasible}.
This algorithm terminates with the average complementary gap below $\varepsilon$, i.e., $(\tilde{x}^k)^\top \lambda^k/d\leq \varepsilon$, in 
\begin{align}
k=O(d\log (1/\varepsilon)), \label{IP_complexity}
\end{align}
as shown in \cite
[Theorem 3.2]{gondzio2013convergence}.

\renewcommand{\thealgorithm}{\arabic{algnum}}
\stepcounter{algnum}
\begin{algorithm}[t]
\begin{algorithmic}[1]
\REQUIRE A primal-dual strictly feasible point $(\tilde{x}^0, \lambda^0)$, stopping parameter $\varepsilon>0$, step size $\alpha>0$, shrinkage rates $\sigma, \rho\in(0,1)$, iterative number $k\leftarrow 0$
\WHILE{$(\tilde{x}^k)^\top \lambda^k/d\geq \varepsilon$}
\STATE 
Calculate $\Delta \tilde{x}^k$ by solving \eqref{Delta_x} with $\mu:=(\tilde{x}^k)^\top \lambda^k/d$
\STATE
Calculate $\Delta \lambda^k$ using \eqref{Newton_eq2}
\STATE
$(\tilde{x}^{k+1}, \lambda^{k+1}) \leftarrow (\tilde{x}^{k}, \lambda^{k}) + \alpha (\Delta \tilde{x}^k, \Delta \lambda^k)$
\WHILE{$\tilde{x}^{k+1}\not>0$ or $\lambda^{k+1}\not>0$}
\STATE $\alpha \leftarrow \rho\alpha$
\STATE $(\tilde{x}^{k+1}, \lambda^{k+1}) \leftarrow (\tilde{x}^{k}, \lambda^{k}) + \alpha (\Delta \tilde{x}^k, \Delta \lambda^k)$
\ENDWHILE
\STATE $k\leftarrow k+1$ 
\ENDWHILE
\ENSURE{$x^k = -\tilde{x}^k$}
\end{algorithmic}
\caption{Interior point method (for Problem \ref{prob_3})}
\label{alg_IP}
\end{algorithm}


\subsubsection{V-FISTA}
Problem \ref{prob_3} can be solved by using
the fast iterative shrinkage-thresholding algorithm (FISTA) \cite{Beck2009-qd}, which is a fast proximal gradient method for minimizing a composite convex function 
\begin{align*}\min_{x\in \mathbb{R}^{d}}\{F(x):=f(x)+g(x)\},\end{align*}
where $f$ and $g$ satisfy the following assumptions.
\begin{itemize}
\item $f:\mathbb{R}^d \to (-\infty,\infty)$ is convex and $\beta$-smooth for some $\beta>0$.
\item $g:\mathbb{R}^d\to (-\infty,\infty]$ is proper, closed, and convex.
\end{itemize}
Problem \ref{prob_3} is a special case of this problem by letting
$f(x) := \frac{1}{2}x^\top Qx+b^\top x$
and
$g(x):=
\begin{cases}0&(x\in C_d)\\\infty&(x\notin C_d)\end{cases}$, where $C_d$ is defined as in \eqref{C_d}.
The smoothness of $f$ is confirmed by, for any $x,y\in \mathbb{R}^d$,
\begin{align*}
\|\nabla f(x)-\nabla f(y)\|=\|Q(x-y)\|\leq \lambda_{\max} (Q)\|x-y\|,
\end{align*}
where $\lambda_{\max} (Q)$ denotes the maximum eigenvalue of $Q$.
From Lemma \ref{lem_Qd_spd},
$\lambda_{\max} (Q)=2+2d$.

V-FISTA \cite{beck2017first}, a variant of FISTA shows an improved convergence rate under the additional assumption:
\begin{itemize}
\item $f$ is $\sigma$-strongly convex for some $\sigma>0$.
\end{itemize}
For Problem \ref{prob_3} in this paper,
\begin{align*}
f(x)-2\cdot\frac{1}{2}\|x\|^2= \frac{1}{2}x^\top (2\mathrm{J}_d) x-b^\top x,
\end{align*}
which is convex.
Thus, $f$ is $\sigma$-strongly convex \cite[Theorem 5.17]{beck2017first} for $\sigma:=2$. 

The general form of V-FISTA is shown in Algorithm \ref{alg_VFISTA}. Algorithm \ref{alg_VFISTA} deals with the non-smooth term using the proximal operator, defined as
\begin{equation}
\mathrm{prox}_g(y):= \mathop{\mathrm{argmin}}_{x\in \mathbb{R}^d}\left\{g(x)+\frac{1}{2}\|x-y\|^2\right\}.
\end{equation}
In our problem, the proximal operator is given by
\begin{align}
\mathrm{prox}_{g/\beta}\left(y^k-\frac{1}{\beta}\nabla f\left(y^k\right)\right)
=\Pi_{C_d}\left(y^k-\frac{1}{\beta}\left(Qy^k+b\right)\right),\label{prox_project}
\end{align}
where $\Pi_{C_d}$ is the projection onto $C_d$, defined as $\Pi_{C_d}(x)=\{\min \{x_i,0\}\}_{i=1}^d$.

As shown in \cite{beck2017first}, the convergence rate of V-FISTA is
\begin{align}
F(x^k)-F_\mathrm{opt}=O((1-\sqrt{\sigma/\lambda_{\max}(Q)})^k)=O((1-1/\sqrt{1+d})^k), \label{convergen_rate_FISTA}
\end{align}
which is faster than the convergence rate $O(1/k^2)$ of the general FISTA algorithm and preserves the convergence rate of the restarted FISTA. Here, $F_\mathrm{opt}$ denotes the optimal objective value. Additionally, V-FISTA is simple in the sense that it does not require consideration of stepsize strategies.
From \eqref{convergen_rate_FISTA}, the iteration number $k$, which satisfies $F(x^k)-F_{\mathrm{opt}}< \varepsilon$, can be estimated with
\begin{align}
k=O\left(\frac{1}{\log \frac{\sqrt{1+d}}{\sqrt{1+d}-1}} \log (1/\varepsilon)\right). \label{iter_FISTA}
\end{align}

\renewcommand{\thealgorithm}{\arabic{algnum}}
\stepcounter{algnum}
\begin{algorithm}[t]
\begin{algorithmic}[1]
\REQUIRE $x^0, y^0\in C_d$, $\kappa=\frac{\beta}{\sigma}$, $\varepsilon>0$, $k\leftarrow 0$
\WHILE{$F(x^k)-F_{\rm opt}\geq \varepsilon$}
\STATE $x^{k+1}\leftarrow\mathrm{prox}_{g/\beta}\left(y^k-\frac{1}{L}\nabla f\left(y^k\right)\right)$
\STATE $y^{k+1}\leftarrow x^{k+1}+\left(\frac{\sqrt{\kappa}-1}{\sqrt{\kappa+1}}\right)\left(x^{k+1}-x^k\right)$
\STATE $k\leftarrow k+1$
\ENDWHILE
\ENSURE{$x^k$}
\end{algorithmic}
\caption{V-FISTA (for Problem \ref{prob_3})}
\label{alg_VFISTA}
\end{algorithm}

\subsection{Comparision} \label{Sec_comparision}

Table \ref{tab:time_complexity_comparison} compares Algorithm \ref{alg_1_full}, Algorithm \ref{alg_2_full}, an interior point method based on Algorithm \ref{alg_IP}, and a V-FISTA based on Algorithm \ref{alg_VFISTA} in terms of time complexities for solving Problem \ref{prob_1} and finite termination properties.
Here, the finite termination means whether the number of iterations required to obtain the optimal solution is finite.

\begin{table}[t]
    \centering
    \caption{Comparison of time complexities and finite termination properties for four algorithms.}
    \begin{tabular}{lcc}
        \toprule
        Algorithm & Complexity & Finite Termination \\
        \midrule
        Algorithm \ref{alg_1_full} & $O\left(\displaystyle\sum_{i=1}^n d_i^2\right)$ & Yes \\
        Algorithm \ref{alg_2_full} & $O\left(\displaystyle\sum_{i=1}^n d_i\log{d_i}\right)$ & Yes \\
        Interior point method & $O\left(\displaystyle\sum_{i=1}^n d_i^2 \log (1/\varepsilon)\right)$ & No \\
        V-FISTA & $O\left(\displaystyle\sum_{i=1}^n \frac{d_i}{\log \frac{\sqrt{1+d_i}}{\sqrt{1+d_i}-1}} \log (1/\varepsilon)\right)$ & No \\
        \bottomrule
    \end{tabular}\label{tab:time_complexity_comparison}
\end{table}

The time complexities of Algorithms \ref{alg_1_full} and \ref{alg_2_full} follow from Corollaries \ref{com_alg_1_full} and \ref{com_alg_2_full}, respectively.
For both the interior point method and V-FISTA, used for solving Problem \ref{prob_1} with Algorithms \ref{alg_IP} and \ref{alg_VFISTA} respectively, the time complexity is derived from the following steps:
\begin{itemize}
\item Problem \ref{prob_1} is divided into $n$ subproblems, each corresponding to Problem \ref{prob_3}.
\item Each instance of Problem \ref{prob_3} can be solved in $O(d)$ time, as discussed in Section \ref{Sec_existing}, with the iteration number for the interior point method and V-FISTA given by \eqref{IP_complexity} and \eqref{iter_FISTA}, respectively.
\end{itemize}

The finite termination property of Algorithms \ref{alg_1_full} and \ref{alg_2_full} is established based on the discussions in Sections \ref{Sec_Alg1} and \ref{Sec_Alg2}. In contrast, the interior point method and V-FISTA do not possess this property. Instead, they output approximate solutions determined by a parameter $\varepsilon$ in Algorithms \ref{alg_IP} and \ref{alg_VFISTA}, respectively, where the interpretation of $\varepsilon$ varies between the algorithms as detailed in Section \ref{Sec_existing}.

According to Table \ref{tab:time_complexity_comparison}, Algorithm \ref{alg_2_full} is expected to perform the best. In fact, this is demonstrated through numerical experiments in Section \ref{Exp}.

\section{Loopy Graph Laplacians}\label{DG}
In this section, we generalize the nearest graph Laplacian problem to loopy Laplacians that correspond to directed graphs with self-loops. We prove theorems that determine whether the weight of the self-loop edge of the optimal graph is positive or zero. We show that the optimal solution is easily obtained when the self-loop has a positive weight. 
Otherwise, the self-loop weight is promised to be $0$, and we can use the proposed algorithms in Section \ref{DSG}. 

\subsection{Problem Formulation} \label{Sec_pro_formulation2}
Our problem is formulated as the following Problem \ref{prob_1_sl}. 
\begin{framed}
\begin{problem}\label{prob_1_sl}
Given a graph structure $(V,E)$ and a matrix $A\in \mathbb{R}^{n\times n},$
\begin{equation*}
\begin{aligned}
\min_{L\in \mathbb{R}^{n\times n}} \quad & \|A-L\|_\mathrm{F}^2\\
\mathrm{subject\ to}\quad& L\in \mathcal{L}_d(V,E).
\end{aligned}
\end{equation*}
\end{problem}
\end{framed}
\noindent Here, $\mathcal{L}_d(V,E)$ is defined in (\ref{def_Ld}). The difference from Problem \ref{prob_1} is that the row sum of the rows that correspond to the nodes with self-loops can be positive. For rows without self-loops, we can solve the row-wise problem by the proposed algorithms in Section \ref{DSG}.\par
Thus, we consider the rows with self-loops. Let us assume that a self-loop exists in the $i$-th row, i.e., $\{i,i\}\in E$. Our problem can be written in a row-wise form as follows:
\begin{framed}
\begin{problem}
\label{prob_5}
\begin{equation*}
\begin{aligned}
\min_{l\in \mathbb{R}^{n}} \quad & \sum_{j=1}^n \left(A_{ij}-l_j\right)^2\\
\mathrm{subject\ to}\quad& l_i \geq 0,\quad
l_j \leq 0\quad(j\in\mathcal{N}(i)),\\
&l_j = 0 \quad(j\notin \mathcal{N}(i), i\neq j),\quad
{\bf 1}_n^\top l \geq 0.
\end{aligned}
\end{equation*}
\end{problem}
\end{framed}
\subsection{Determining Weights of Self-Loops}
Let $l^\ast\in \mathbb{R}^n$ be the optimal solution to Problem \ref{prob_5} and $A'_{i1},\dots, A'_{in}$ be defined as
\begin{equation}
\label{def_Aprime}
A'_{ij}:=
\begin{cases}
\max\{0,A_{ii}\} & (i=j)\\
\min\{0,A_{ij}\}& (j\in\mathcal{N}(i))\\
0 & (j\notin\mathcal{N}(i),\ i\neq j)
\end{cases}.
\end{equation}
\begin{theorem}
\label{thm_self-loop}
If $\sum_{j=1}^nA'_{ij}\geq 0$ (i.e., if $\begin{bmatrix}A'_{i1}&\dots&A'_{in}\end{bmatrix}^\top$ is a feasible point of Problem \ref{prob_5}), then $l^\ast = \begin{bmatrix}A'_{i1}&\dots&A'_{in}\end{bmatrix}^\top$.
\end{theorem}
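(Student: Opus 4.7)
The plan is to exploit the fact that the objective function of Problem \ref{prob_5} is separable across the coordinates $l_j$, while all constraints except the row-sum constraint $\bm 1_n^\top l \geq 0$ are themselves separable (each coordinate is either forced to $0$, forced to be nonpositive, or forced to be nonnegative). Relaxing the row-sum constraint therefore turns Problem \ref{prob_5} into $n$ independent one-dimensional problems which can be solved in closed form.

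First I would observe that the relaxed problem, obtained from Problem \ref{prob_5} by dropping $\bm 1_n^\top l \geq 0$, decomposes as
\begin{equation*}
\min_{l_i \geq 0}(A_{ii}-l_i)^2 \;+\; \sum_{j\in\mathcal{N}(i)} \min_{l_j \leq 0}(A_{ij}-l_j)^2 \;+\; \sum_{j\notin\mathcal{N}(i),\, j\neq i} A_{ij}^2.
\end{equation*}
Each one-dimensional projection problem is solved by clipping: the minimizer of $(A_{ii}-l_i)^2$ over $l_i\geq 0$ is $\max\{0,A_{ii}\}$, and the minimizer of $(A_{ij}-l_j)^2$ over $l_j\leq 0$ is $\min\{0,A_{ij}\}$. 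Comparing with \eqref{def_Aprime}, the unique minimizer of the relaxed problem is exactly $l' := [A'_{i1},\dots,A'_{in}]^\top$.

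Next I would invoke the standard relaxation principle: if an optimizer of a relaxed problem happens to be feasible for the original tighter problem, then it is also an optimizer of the original problem. The hypothesis $\sum_j A'_{ij}\geq 0$ is precisely the statement that $l'$ satisfies the only constraint that was dropped, namely $\bm 1_n^\top l' \geq 0$. Because the remaining constraints of Problem \ref{prob_5} are all respected by construction of $l'$, it follows that $l'$ is feasible for Problem \ref{prob_5}. Since $l'$ achieves the minimum of the same objective over the larger feasible set of the relaxed problem, it also achieves the minimum over the smaller feasible set of Problem \ref{prob_5}. By strict convexity of the quadratic objective in each coordinate, this minimizer is unique, so $l^\ast = l'$.

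There is no real obstacle here; the only subtlety is making sure that the coordinates $j \notin \mathcal{N}(i)$ with $j \neq i$ are correctly handled, but these are pinned to $l_j = 0$ by the equality constraint in Problem \ref{prob_5} and by \eqref{def_Aprime}, so they contribute the same constant term $A_{ij}^2$ to both the relaxed and original problems and play no role in the argument.
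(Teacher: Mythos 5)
Your proposal is correct and takes essentially the same route as the paper: the paper's proof is a direct term-by-term inequality showing that for any feasible $l$ each coordinate-wise term $(A_{ij}-l_j)^2$ is bounded below by its value at the clipped point $A'_{ij}$, which is exactly your separable relaxation combined with the observation that the relaxed minimizer is feasible under the hypothesis $\sum_j A'_{ij}\geq 0$. The only cosmetic difference is that you phrase it via the relaxation principle while the paper writes the inequality chain explicitly.
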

\begin{proof}
For any feasible $l\in \mathbb{R}^n$ of Problem \ref{prob_5},
\begin{align*}
\sum_{j=1}^n \left(A_{ij}-l_j\right)^2&=\left(A_{ii}-l_i\right)^2+\sum_{j\in \mathcal{N}(i)}\left(A_{ij}-l_j\right)^2+\sum_{j\notin \mathcal{N}(i), i\neq j}\left(A_{ij}-l_j\right)^2\\
&\geq \left(A_{ii}-A'_{ii}\right)^2+\sum_{j\in \mathcal{N}(i)}\left(A_{ij}-A'_{ij}\right)^2+\sum_{j\notin \mathcal{N}(i), i\neq j}\left(A_{ij}-0\right)^2\\
&=\sum_{j=1}^n\left(A_{ij}-A'_{ij}\right)^2.
\end{align*}
\end{proof}
We show that when $A'_{ij}$ is not feasible, the self-loop weight is equal to $0$.
To this end, we prepare the following.
\begin{lemma}
\label{lem_self_increase_i}
If $\sum_{j=1}^nA'_{ij}<0$, then $l^\ast_i> A'_{ii}$.
\end{lemma}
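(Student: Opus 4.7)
The plan is a proof by contradiction combined with a local perturbation argument. Suppose the conclusion fails, i.e., $l^\ast_i \leq A'_{ii}$; I will exhibit a feasible $\tilde l$ close to $l^\ast$ whose objective value is strictly smaller than that of $l^\ast$, contradicting optimality.

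First I would establish that the row-sum constraint is active at the optimum, namely $\bm 1_n^\top l^\ast = 0$. If instead $\bm 1_n^\top l^\ast > 0$, then the row-sum constraint is inactive, so $l^\ast$ minimizes the objective subject only to the box constraints; but the unique minimizer of that restricted problem is precisely the coordinatewise projection $A'_{i\cdot}$ defined in \eqref{def_Aprime}, which by hypothesis $\sum_{j=1}^n A'_{ij} < 0$ violates the row-sum constraint, a contradiction. Hence $\bm 1_n^\top l^\ast = 0$.

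Next, from $\bm 1_n^\top l^\ast = 0$, the contradiction assumption $l^\ast_i \leq A'_{ii}$, and the inequality $\sum_{j \in \mathcal{N}(i)} A'_{ij} < -A'_{ii}$ (a restatement of the hypothesis), I obtain
\[
\sum_{j \in \mathcal{N}(i)} l^\ast_j \;=\; -l^\ast_i \;\geq\; -A'_{ii} \;>\; \sum_{j \in \mathcal{N}(i)} A'_{ij}.
\]
Therefore some index $j_0 \in \mathcal{N}(i)$ satisfies $l^\ast_{j_0} > A'_{ij_0}$. Combined with $l^\ast_{j_0} \leq 0$ and $A'_{ij_0} = \min\{0, A_{ij_0}\}$, this forces $A_{ij_0} \leq 0$, so that $A'_{ij_0} = A_{ij_0}$ and $l^\ast_{j_0} > A_{ij_0}$. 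I would then define $\tilde l$ from $l^\ast$ by $\tilde l_i = l^\ast_i + \delta$, $\tilde l_{j_0} = l^\ast_{j_0} - \delta$, and $\tilde l_k = l^\ast_k$ for every other index $k$. For $\delta > 0$ sufficiently small, $\tilde l$ remains feasible (the box constraints are preserved and the row sum is unchanged), and a direct expansion gives
\[
\sum_{k=1}^n (A_{ik} - \tilde l_k)^2 - \sum_{k=1}^n (A_{ik} - l^\ast_k)^2 \;=\; 2\delta\bigl[(l^\ast_i - A_{ii}) - (l^\ast_{j_0} - A_{ij_0})\bigr] + 2\delta^2.
\]

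The main obstacle is verifying that the bracketed quantity is strictly negative, which would give a first-order decrease and hence the desired contradiction. When $A_{ii} \geq 0$, the assumption $l^\ast_i \leq A'_{ii} = A_{ii}$ immediately yields $l^\ast_i - A_{ii} \leq 0 < l^\ast_{j_0} - A_{ij_0}$, and the proof is complete. The subtler regime is $A_{ii} < 0$, where $A'_{ii} = 0$, $l^\ast_i = 0$, and $l^\ast_i - A_{ii} = -A_{ii} > 0$: here I would either refine the perturbation by updating all indices in the set $\{j \in \mathcal{N}(i) : l^\ast_j > A_{ij}\}$ simultaneously, weighted by the respective gaps so that the aggregate excess dominates $-A_{ii}$ via the strict inequality $\sum_j A'_{ij} < 0$, or switch to a KKT-based argument that combines the stationarity relation $l^\ast_i = A_{ii} + (\mu + \nu)/2$ (with $\nu > 0$ from the active row-sum constraint) with complementary slackness to rule out $l^\ast_i = 0$.
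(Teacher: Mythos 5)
Your argument succeeds exactly where the paper's does and stalls exactly where the paper's does, but you are more explicit about the sticking point. Your Step 1 (establishing $\bm 1_n^\top l^\ast = 0$) is correct and is actually more than the paper uses: the paper invokes only feasibility $\bm 1_n^\top l^\ast \geq 0$ to extract an index $k\neq i$ with $l^\ast_k > A'_{ik}$ (if no such $k$ existed, then $\bm 1_n^\top l^\ast \leq \sum_j A'_{ij} < 0$), and then performs the same two-coordinate transfer $\tilde l_i = l^\ast_i+\delta$, $\tilde l_k = l^\ast_k-\delta$ that you propose, with $\delta$ chosen as an explicit $\Delta$ rather than ``sufficiently small.'' (As a bonus, your Step 1 already proves Theorem \ref{thm_no_self-loop} outright, without this lemma.) In the regime $A_{ii}\geq 0$, where $A'_{ii}=A_{ii}$, your sign analysis of the bracket is correct and your proof closes; this covers the paper's case (i) and the benign half of its case (ii).

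The case you flag, $A_{ii}<0$ (so $A'_{ii}=0$ and the contradiction hypothesis forces $l^\ast_i=0$), is a genuine gap, and neither of your proposed repairs can close it, because the lemma as stated fails there. Take $n=2$, $\mathcal{N}(1)=\{2\}$, $A_{11}=-10$, $A_{12}=-1$: then $A'_{11}=0$, $A'_{12}=-1$, and $\sum_j A'_{1j}=-1<0$, yet $l^\ast=(0,0)$ is optimal (the KKT conditions hold at the origin with multipliers $\mu_1=18$, $\mu_2=0$ for the sign constraints and $\nu=2$ for the row-sum constraint), so $l^\ast_1=0=A'_{11}$. Your ``aggregate excess'' idea fails because the total excess $\sum_{j\in\mathcal N(i)}(l^\ast_j-A'_{ij})$ equals $-\sum_j A'_{ij}$ (here $1$), which need not dominate $-A_{ii}$ (here $10$); the KKT route fails because stationarity at $l^\ast_i=0$ only requires $\mu_i+\nu=-2A_{ii}>0$, which is perfectly consistent. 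In locating this case you have in fact located a flaw in the paper's own proof: its case (ii) evaluates $(A_{ii}-l^\ast_i)^2=0$, which tacitly assumes $l^\ast_i=A_{ii}$, i.e.\ $A_{ii}\geq 0$. The statement survives only in the weakened form $l^\ast_i\geq A'_{ii}$ (easy: if $0\le l^\ast_i<A'_{ii}=A_{ii}$, slightly increasing $l_i$ preserves all constraints and strictly decreases the objective) or under the extra hypothesis $A_{ii}\geq 0$; Theorem \ref{thm_no_self-loop} itself remains true, as your Step 1 shows directly.
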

\begin{proof}
Assume that $l_i^\ast \leq A'_{ii}$ holds. There exists at least one $k\in [n]\backslash\{i\}$ that $l_k^\ast > A'_{ik}$ holds (otherwise, $\bm 1^\top l^\ast\leq \sum_{j=1}^n A'_{ij}<0$). We consider the two cases (i) $l^\ast_i<A'_{ii}$ and (ii) $l^\ast_i=A'_{ii}$.\par
\noindent(i) If $l^\ast_i<A'_{ii}$, then $A_{ii}=A'_{ii}$ and $A_{ik}=A'_{ik}$ hold from $0\leq l_i^\ast < A'_{ii}=\max\{0,A_{ii}\}$ and $\min\{0, A_{ik}\}=A'_{ik}<l_k^\ast \leq 0$. Now let $\Delta$ be $\Delta:=\min\{A_{ii}-l^\ast_i, l^\ast_k -A_{ik}\}>0$ and $x\in \mathbb{R}^n$ be
$x_j:=
\begin{cases}
l^\ast_i +\Delta & (j=i)\\
l^\ast_k -\Delta & (j=k)\\
l^\ast_j & (\mathrm{otherwise})
\end{cases}$.
Since $l^\ast$ is a feasible point of Problem \ref{prob_5}, $x$ is also feasible. Now, from $l^\ast_i <l_i^\ast+\Delta \leq A_{ii}$ and $A_{ik}\leq l^\ast_k-\Delta<l^\ast_k$, we derive
\begin{align*}
&\sum_{j=1}^n\left(A_{ij}-l^\ast_j\right)^2-\sum_{j=1}^n\left(A_{ij}-x_j \right)^2\\
&= \left(A_{ii}-l^\ast_i\right)^2-\left(A_{ii}-x_i\right)^2+\left(A_{ik}-l^\ast_k\right)^2-\left(A_{ik}-x_k\right)^2\\
&=\left(A_{ii}-l^\ast_i\right)^2-\left(A_{ii}-(l^\ast_i+\Delta)\right)^2+\left(A_{ik}-l^\ast_k\right)^2-\left(A_{ik}-(l^\ast_k-\Delta)\right)^2 >0,
\end{align*}
which contradicts the optimality of $l^\ast$.\par
\noindent
(ii) If $l^\ast_i=A'_{ii}$, then $A_{ik}=A'_{ik}$ holds from $\min\{0, A_{ik}\}=A'_{ik}<l_k^\ast \leq 0$. Now, let $\Delta$ be $\Delta:= \frac{1}{2}(l^\ast_k-A'_{ik})=\frac{1}{2}(l^\ast_k-A_{ik})>0$, and $x\in \mathbb{R}^n$ be
$x_j:=
\begin{cases}
l^\ast_i +\Delta & (j=i)\\
l^\ast_k -\Delta & (j=k)\\
l^\ast_j & (\mathrm{otherwise})
\end{cases}$.
Since $l^\ast$ is a feasible point of Problem \ref{prob_5}, $x$ is also feasible. Now, we derive
\begin{align*}
&\sum_{j=1}^n\left(A_{ij}-l^\ast_j\right)^2-\sum_{j=1}^n\left(A_{ij}-x_j \right)^2\\
&= \left(A_{ii}-l^\ast_i\right)^2-\left(A_{ii}-x_i\right)^2+\left(A_{ik}-l^\ast_k\right)^2-\left(A_{ik}-x_k\right)^2\\
&=\left(A_{ii}-l^\ast_i\right)^2-\left(A_{ii}-(l^\ast_i+\Delta)\right)^2+\left(A_{ik}-l^\ast_k\right)^2-\left(A_{ik}-(l^\ast_k-\Delta)\right)^2\\
&=0-\Delta^2 + 4\Delta^2-\Delta^2
=2\Delta^2>0,
\end{align*}
which contradicts the optimality of $l^\ast$.
\end{proof}
\begin{theorem}
\label{thm_no_self-loop}
If $\sum_{j=1}^nA'_{ij}<0$, then $\bm 1^\top l^\ast = 0$ (i.e., the self-loop weight of the optimal Laplacian is $0$).
\end{theorem}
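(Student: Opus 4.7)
The plan is to argue by contradiction. I assume that $\bm 1^\top l^\ast > 0$ and construct a small perturbation of $l^\ast$ that strictly decreases the objective while remaining feasible for Problem \ref{prob_5}, contradicting optimality. The perturbation will decrease only the self-loop coordinate $l^\ast_i$ by a small amount $\Delta>0$, leaving all other coordinates unchanged.

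The crucial ingredient is Lemma \ref{lem_self_increase_i}, which under the hypothesis $\sum_{j=1}^n A'_{ij}<0$ gives $l^\ast_i>A'_{ii}=\max\{0,A_{ii}\}$. This yields two strict inequalities I will exploit: first, $l^\ast_i>0$, so reducing $l^\ast_i$ slightly preserves the nonnegativity constraint $l_i\geq 0$; second, $l^\ast_i>A_{ii}$, so $A_{ii}-l^\ast_i<0$, meaning $l^\ast_i$ currently overshoots $A_{ii}$ from above in the least-squares term and decreasing it brings it closer to $A_{ii}$. Combined with the contradiction hypothesis $\bm 1^\top l^\ast>0$, these give three strict inequalities, so for sufficiently small $\Delta>0$ the perturbed vector $x$ with $x_i=l^\ast_i-\Delta$ and $x_j=l^\ast_j$ $(j\neq i)$ satisfies all of: $x_i\geq 0$, $x_i>A_{ii}$, and $\bm 1^\top x=\bm 1^\top l^\ast-\Delta\geq 0$. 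The remaining constraints ($x_j\leq 0$ for $j\in\mathcal{N}(i)$ and $x_j=0$ for $j\notin\mathcal{N}(i)$, $j\neq i$) are untouched, so $x$ is feasible.

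Next I will compute the change in the objective. Only the $i$-th term changes, and writing $a:=A_{ii}-l^\ast_i<0$ gives
\begin{equation*}
\sum_{j=1}^n(A_{ij}-l^\ast_j)^2-\sum_{j=1}^n(A_{ij}-x_j)^2 = a^2-(a+\Delta)^2=-\Delta(2a+\Delta).
\end{equation*}
Since $a<0$, for all sufficiently small $\Delta>0$ the factor $2a+\Delta$ is negative, so the difference is strictly positive. This contradicts the optimality of $l^\ast$ and forces $\bm 1^\top l^\ast=0$.

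The main obstacle is simply marshalling the three strict inequalities needed to make the perturbation work; once Lemma \ref{lem_self_increase_i} is invoked, $l^\ast_i>0$ and $l^\ast_i>A_{ii}$ are immediate, and the remaining strict inequality $\bm 1^\top l^\ast>0$ is exactly what the contrapositive assumes. No careful case analysis like in the proof of Lemma \ref{lem_self_increase_i} is required here because only a single coordinate is perturbed, so the bookkeeping stays trivial.
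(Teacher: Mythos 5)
Your proof is correct and follows essentially the same route as the paper's: both assume $\bm 1^\top l^\ast>0$, invoke Lemma \ref{lem_self_increase_i} to get $l^\ast_i>A'_{ii}=\max\{0,A_{ii}\}$, and perturb only the self-loop coordinate downward to obtain a feasible point with strictly smaller objective. The only cosmetic difference is that the paper fixes the explicit step $\Delta=\min\{\bm 1^\top l^\ast,\ l^\ast_i-A'_{ii}\}$ whereas you take $\Delta$ sufficiently small; both choices make the same three inequalities work.
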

\begin{proof}
Assume that $\bm 1^\top l^\ast >0$. From Lemma \ref{lem_self_increase_i}, we have $ l^\ast_i>A'_{ii}$. Let $\Delta>0$ be $\Delta:= \min\{\bm 1^\top l^\ast, l^\ast_i-A'_{ii}\}$ and $x\in \mathbb{R}^n$ be
\begin{equation*}
x_j:=
\begin{cases}
l^\ast_i -\Delta & (j=i)\\
l^\ast_j & (\mathrm{otherwise})
\end{cases}.
\end{equation*}
Since $\bm 1^\top x =\bm 1^\top l^\ast-\Delta\geq 0$ and $x_j\leq 0$ for any $j\in[n]\backslash\{i\}$, we have $x_i\geq 0$ and thus $x$ is feasible. From the definition of $\Delta$, we have $A_{ii}\leq A'_{ii}\leq l^\ast_i-\Delta<l_i^\ast$. Now we can derive
\begin{align*}
\sum_{j=1}^n\left(A_{ij}-l^\ast_j\right)^2-\sum_{j=1}^n\left(A_{ij}-x_j \right)^2
&= \left(A_{ii}-l^\ast_i\right)^2-\left(A_{ii}-x_i\right)^2\\
&= \left(A_{ii}-l^\ast_i\right)^2-\left(A_{ii}-(l^\ast_i-\Delta)\right)^2 >0,
\end{align*}
which contradicts the optimality of $l^\ast$.
\end{proof}
\begin{corollary}
$\bm 1^\top l^\ast>0$ is equivalent to $\sum_{j=1}^nA'_{ij}>0$. 
\end{corollary}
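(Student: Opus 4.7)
The plan is to prove the corollary by combining Theorems \ref{thm_self-loop} and \ref{thm_no_self-loop} via a case analysis on the sign of $\sum_{j=1}^n A'_{ij}$. These two theorems cover the regimes $\sum_j A'_{ij} \geq 0$ and $\sum_j A'_{ij} < 0$ respectively, so together they determine $l^\ast$ (or at least its row-sum $\bm{1}^\top l^\ast$) completely in terms of this quantity.

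For the reverse implication ($\sum_j A'_{ij} > 0 \Rightarrow \bm{1}^\top l^\ast > 0$), I would simply invoke Theorem \ref{thm_self-loop}: since $\sum_j A'_{ij} > 0 \geq 0$, the feasibility hypothesis of that theorem holds, giving $l^\ast = [A'_{i1},\ldots, A'_{in}]^\top$. Summing yields $\bm{1}^\top l^\ast = \sum_j A'_{ij} > 0$.

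For the forward implication, I would argue by contrapositive: assuming $\sum_j A'_{ij} \leq 0$, I will show $\bm{1}^\top l^\ast = 0$. Split into two subcases. If $\sum_j A'_{ij} < 0$, then Theorem \ref{thm_no_self-loop} directly gives $\bm{1}^\top l^\ast = 0$. If instead $\sum_j A'_{ij} = 0$, then by Theorem \ref{thm_self-loop} we have $l^\ast = [A'_{i1}, \ldots, A'_{in}]^\top$, and therefore $\bm{1}^\top l^\ast = \sum_j A'_{ij} = 0$. In either subcase $\bm{1}^\top l^\ast = 0$, contradicting $\bm{1}^\top l^\ast > 0$.

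I do not anticipate any real obstacle; the corollary is essentially an immediate consequence of the preceding two theorems once one observes that the boundary case $\sum_j A'_{ij} = 0$ must be handled separately from the strict inequality (otherwise one would only obtain the weaker equivalence $\bm{1}^\top l^\ast > 0 \iff \sum_j A'_{ij} \geq 0$, which is false). The argument is therefore a short combination of the two theorems with a careful treatment of the equality case.
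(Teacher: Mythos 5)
Your proposal is correct and matches the paper's own proof: both combine Theorem \ref{thm_self-loop} (split into the cases $\sum_j A'_{ij}>0$ and $\sum_j A'_{ij}=0$) with Theorem \ref{thm_no_self-loop} for the case $\sum_j A'_{ij}<0$, yielding the equivalence by exhausting the three sign regimes. Your explicit handling of the boundary case $\sum_j A'_{ij}=0$ is exactly the step the paper's proof records as its second displayed implication.
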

\begin{proof}
From Theorem \ref{thm_self-loop}, we have
\begin{align*}
\sum_{j=1}^nA'_{ij}>0\quad &\Rightarrow\quad \bm 1^\top l^\ast=\sum_{j=1}^nA'_{ij}>0, \\
\sum_{j=1}^nA'_{ij}=0\quad &\Rightarrow\quad \bm 1^\top l^\ast=\sum_{j=1}^nA'_{ij}=0.
\end{align*} 
 Combining these facts and Theorem \ref{thm_no_self-loop}, we obtain $\bm 1^\top l^\ast>0\,\Leftrightarrow\, \sum_{j=1}^nA'_{ij}>0$. 
\end{proof}
\subsection{Proposed Algorithm for Loopy Laplacians}
We propose Algorithm \ref{alg_3} as a method to compute the nearest loopy Laplacian. 
\renewcommand{\thealgorithm}{\arabic{algnum}}
\stepcounter{algnum}
\begin{algorithm}[t]
\begin{algorithmic}[1]
\REQUIRE $A\in \mathbb{R}^{n\times n}$, $(V,E)$, neighbor set of each node: $\mathcal{N}(i)~(i=1,\dots,n)$
\STATE $L^\ast\leftarrow \bm 0_{n\times n}$
\FOR{$i=1,\dots,n$}
\IF{$\{i,i\}\notin E$}
\STATE $L^\ast_{\{i\}[n]}\leftarrow$ solution of Algorithm \ref{alg_1} or \ref{alg_2} applied to row $i$.
\ELSE 
\STATE Define $a':=\begin{bmatrix}A'_{i1}&\dots&A'_{in}\end{bmatrix}^\top$ as in (\ref{def_Aprime}).
\IF {$\bm 1^\top a'\geq 0$}
\STATE $L^\ast_{\{i\}[n]}\leftarrow a'$
\ELSE 
\STATE $L^\ast_{\{i\}[n]}\leftarrow$ solution of Algorithm \ref{alg_1} or \ref{alg_2} applied to row $i$, by assuming $\{i,i\}\notin E$.
\ENDIF
\ENDIF
\ENDFOR
\ENSURE $L^\ast$

\end{algorithmic}
\caption{Compute the nearest Loopy Laplacian}
\label{alg_3}
\end{algorithm}
The for loop calculates the optimal solution for each row. If row $i$ does not have a self-loop, we can solve it as in the loop-less case. If row $i$ has a self-loop and the assumption of Theorem \ref{thm_self-loop} ($\sum_{j=1}^nA'_{ij}\geq0$) holds, then (\ref{def_Aprime}) is the optimal solution to row $i$. Otherwise (if $\sum_{j=1}^nA'_{ij}<0$),  from Theorem \ref{thm_no_self-loop}, we know that the edge weight of the self-loop is $0$ and therefore we can calculate as in the loop-less case by assuming $\{i,i\}\notin E$.

Note that we can also use Algorithms \ref{alg_IP} and \ref{alg_VFISTA} as explained in Section \ref{Sec_existing} in steps 4 and 10 of Algorithm \ref{alg_3} in place of Algorithms \ref{alg_1} and \ref{alg_2}.

\section{Numerical Experiments}\label{Exp}
In this section, we numerically compare our proposed Algorithms \ref{alg_1_full} and \ref{alg_2_full} with the interior point method based on Algorithm \ref{alg_IP} and V-FISTA based on Algorithm \ref{alg_VFISTA}.
For the numerical comparison, we used the following parameter values: for Algorithm \ref{alg_IP}, $\varepsilon=10^{-6}$, $\alpha=1$, $\rho=0.9$, and $\sigma = 0.5$; for Algorithm \ref{alg_VFISTA}, $\varepsilon=10^{-6}$, $\beta = 2+2\max\{d_1,\ldots, d_n\}$, and $\sigma =2$. 
Note that as explained in Section \ref{Sec_comparision}, Algorithms \ref{alg_IP} and \ref{alg_VFISTA} only output approximate solutions characterized by the parameter $\varepsilon$, unlike Algorithms \ref{alg_1_full} and \ref{alg_2_full}, which provide exact optimal solutions.
In this comparison, we focus solely on loop-less cases, as the algorithms for loop-less cases can also address loopy ones, as demonstrated in Algorithm \ref{alg_3}.
All the tests were computed by 
MATLAB R2023b on a Windows 10 Pro with Intel Xeon Silver 4214R CPU @ 2.40 GHz and 192GB RAM.

\subsection{Directed Loop-Less Graph Laplacians}\label{DLLLGL}
This section shows the computational time of the nearest graph Laplacian problem when the graph structure is directed. 

We generated matrix $A$ by constructing a ``noisy'' graph Laplacian. We employed the method used in \cite{sato2019optimal}.
\begin{description}
\item[Step 1.] Generate an unweighted and undirected graph structure $(V,E)$ by the Watts-Strogatz model \cite{Watts1998-pu}.
\item[Step 2.] Replace every edge with two bidirectional edges.
\item[Step 3.] For any $\{i,j\}\in E$, set the edge weight $w_{ij}\leftarrow 10\times \mathrm{rand}$.
\item[Step 4.] Construct the graph Laplacian of $(V,E,w)$ and denote by $X\in\mathbb{R}^{n\times n}$.
\item[Step 5.] Construct $A\in \mathbb{R}^{n\times n}$ by
\begin{align}
A = X +5\times {\rm randn}(n,n).
\end{align}
\end{description}
Here, $\mathrm{rand}$ is a random scaler drawn from the uniform distribution in the interval $(0,1)$, and $\mathrm{randn}(n,n)$ is a random matrix whose elements are drawn from the standard normal distribution. To avoid memory shortage, we use the sparse matrix format. The Watts-Strogatz model randomly generates a graph that represents both high clustering properties and small path length properties, which can be seen in social networks \cite{Albert2002-gp}.

\subsubsection{Small-scale case ($n=100$)}

We considered small-scale cases with $n=100$ with the average out-degree $20$.
These cases arise in the context of model reduction problems, as explained in Section \ref{Intro_reduction}.

Fig. \ref{fig:ExecutionTimes_N100_K10}
illustrates the box plot when
we measured the computational time for 100 different random $A$.
This figure displays a box plot comparing the execution times of four different algorithms: Active Set (Algorithm \ref{alg_1_full}), Sort (Algorithm \ref{alg_2_full}), IP (Algorithm \ref{alg_IP}), and V-FISTA (Algorithm \ref{alg_VFISTA}). Execution times are plotted on a logarithmic scale in seconds on the vertical axis. The central line of each box represents the median execution time, the edges of the box are the first and third quartiles, and the whiskers extend to show the range of the data, excluding outliers. Outliers are plotted individually as red plus signs. Algorithms \ref{alg_1_full} and \ref{alg_2_full} show similar distributions of execution times, both with median values around $10^{-2}$ seconds. Algorithms \ref{alg_IP} and \ref{alg_VFISTA} have higher median execution times, around $10^{-1}$ seconds, indicating that they are generally slower than Algorithms \ref{alg_1_full} and \ref{alg_2_full}. The spread of the data points suggests that the variability in execution times is greater for Algorithms \ref{alg_IP} and \ref{alg_VFISTA} compared to Algorithms \ref{alg_1_full} and \ref{alg_2_full}.

\begin{figure}[t]
    \centering
    \includegraphics[width=12cm]{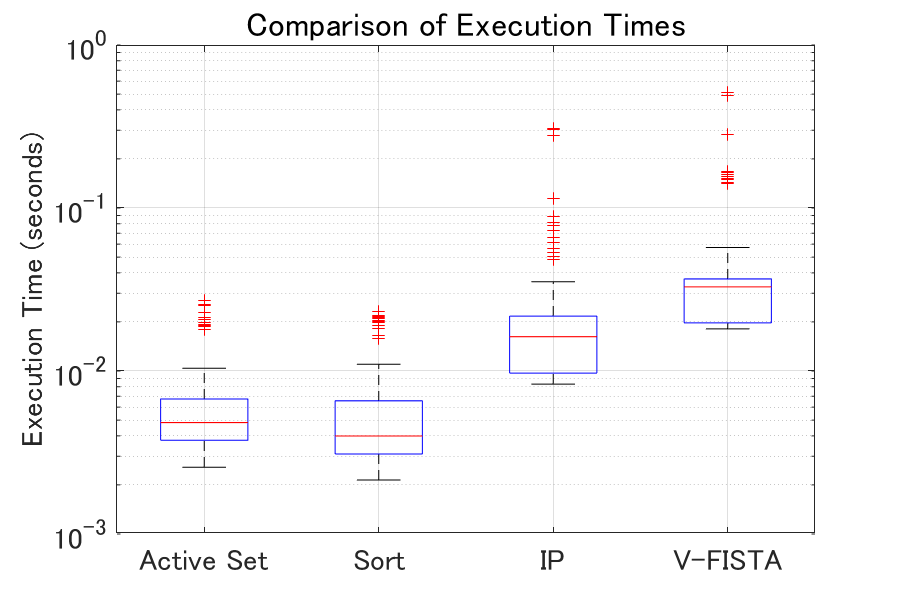}
    \caption{Comparative analysis of algorithm execution times for small-scale cases.}
\label{fig:ExecutionTimes_N100_K10}
\end{figure}

\subsubsection{Large-scale case ($n=30000$)}

We considered large-scale cases with $n=30000$ with the average out-degree $20$.
These cases arise in the context of large-scale system identification problems, as explained in Section \ref{Intro_identification}.

Fig.\,\ref{fig:ExecutionTimes_N30000_K10} represents the execution times of the same four algorithms: Active Set (Algorithm \ref{alg_1_full}), Sort (Algorithm \ref{alg_2_full}), IP (Algorithm \ref{alg_IP}), and V-FISTA (Algorithm \ref{alg_VFISTA}). This plot depicts the execution times in actual seconds on the vertical axis, ranging from approximately 75 to 110 seconds. The median execution times for Algorithms \ref{alg_1_full} and \ref{alg_2_full} are clustered closely around 80 seconds, with relatively small interquartile ranges, indicating a tight grouping of data and less variability in execution times. The IP (Algorithm \ref{alg_IP}) displays a median execution time slightly above 100 seconds, with a broader interquartile range, suggesting greater variability. The V-FISTA (Algorithm \ref{alg_VFISTA}) exhibits a median execution time that is slightly slower than those of Algorithms \ref{alg_1_full} and \ref{alg_2_full}, as well as a wider spread of execution times, indicated by a larger interquartile range.

\begin{figure}[t]
    \centering
    \includegraphics[width=12cm]{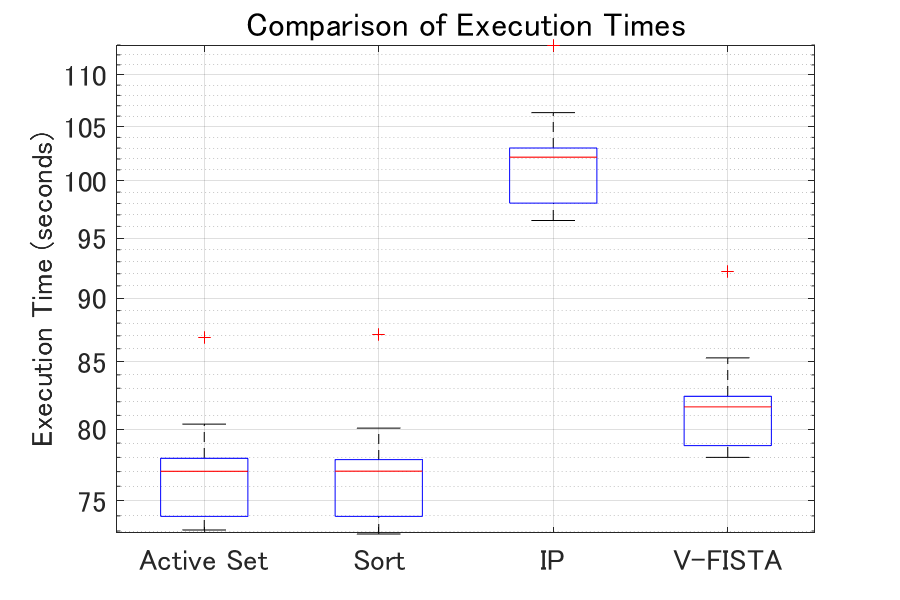}
    \caption{Comparative analysis of algorithm execution times for large-scale cases.}
\label{fig:ExecutionTimes_N30000_K10}
\end{figure}

\subsection{Worst Case for Algorithm 1}
In Section \ref{DLLLGL}, we demonstrated that the performance of Algorithms \ref{alg_1_full} and \ref{alg_2_full} is nearly identical, although the theoretical computational complexity of Algorithm \ref{alg_2_full} is better than that of Algorithm \ref{alg_1_full}, as shown in Table \ref{tab:time_complexity_comparison} in Section \ref{Sec_comparision}. In this section, we present the computational time for an artificially generated worst-case scenario applied to Algorithm \ref{alg_1_full}, which is detailed in Appendix \ref{ape_A}.

The worst-case instance of Problem \ref{prob_3} is the case when the free variable set always decreases only one variable at a time. First we generated the sequence $b_1, b_2, \dots$, by 
$b_1 = -\frac{1}{2}$,
$b_{k} = (k+1)b_{k-1}-S_{k-1}$ $(k=2,3,\dots)$,
where $S_k$ is the cumulative sum: $S_k=\sum_{i=1}^k b_k$ (see Appendix A).  Then, we defined $A\in \mathbb{R}^{n\times n}$ by (\ref{Aworst}).
 It is worth pointing out that, from \eqref{app10}, the sequence $\{A_{i\mathcal{N}(i)_k}\}_{k=1}^d$ increases exponentially, and such instances for large $d$ is unlikely to be seen in the real world.

Figs.\,\ref{fig:ExecutionTimes(worst)_N100_K10} and \ref{fig:ExecutionTimes(worst)_N30000_K10} show the computational times with $n=100$ and $n=30000$, respectively.
Because of the way the matrix $A$ was constructed, as described above,
Algorithm \ref{alg_1_full} exhibited slower execution times compared to Algorithm \ref{alg_2_full}.
Notably, the interior point method, referred to as IP in these figures, was outperformed by the other methodologies. 

\begin{figure}[t]
    \centering
    \includegraphics[width=12cm]{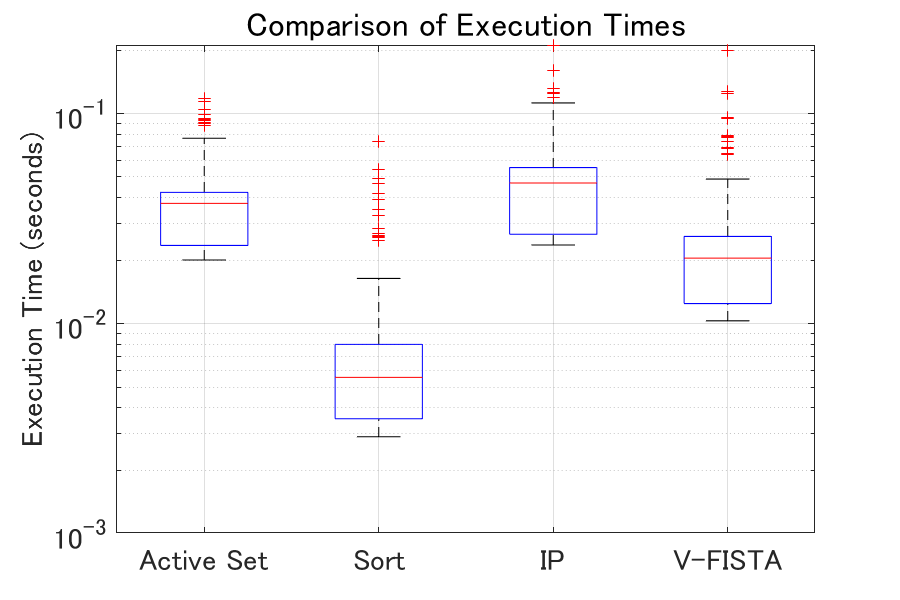}
    \caption{Comparative analysis of algorithm execution times for small-scale worst-case scenarios on Algorithm \ref{alg_1_full}.}
\label{fig:ExecutionTimes(worst)_N100_K10}
\end{figure}

\begin{figure}[t]
    \centering
    \includegraphics[width=12cm]{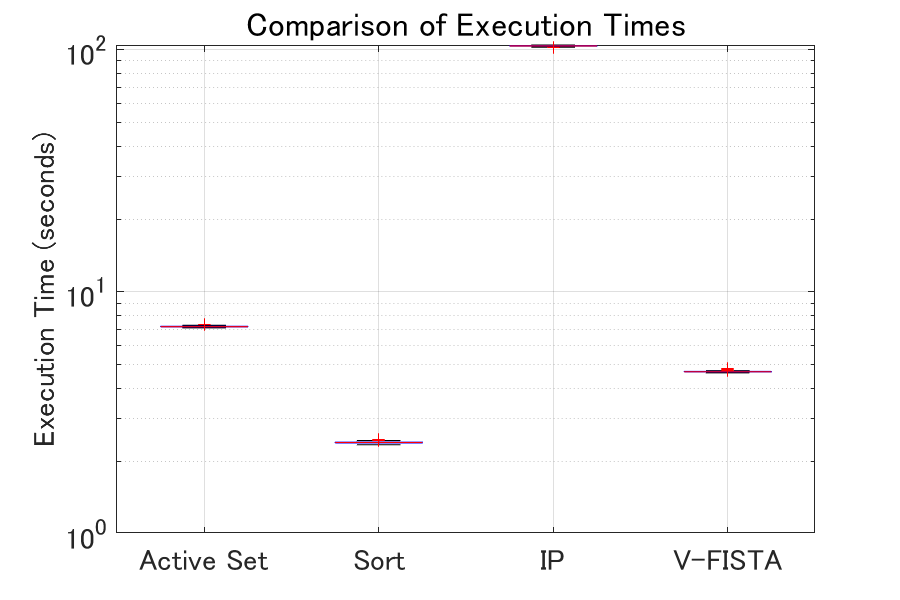}
    \caption{Comparative analysis of algorithm execution times for large-scale worst-case scenarios on Algorithm \ref{alg_1_full}.}
\label{fig:ExecutionTimes(worst)_N30000_K10}
\end{figure}

\section{Concluding Remarks}\label{Conclusion}
In this paper, we formulated the problem of finding the nearest graph Laplacian matrix in the Frobenius norm as a convex quadratic optimization problem with linear constraints. In the case of directed simple graphs, we proposed two novel algorithms that directly compute the global optimal solution to this problem. 
We showed that from a computational complexity perspective, our proposed algorithms are more efficient than both the interior point method and V-FISTA.
We also proved that the proposed methods can be generalized to directed graphs with self-loops by a simple preprocessing step.  Moreover, the numerical experiments corroborated the results of our theoretical analysis.
Furthermore, we demonstrated that there are worst-case scenarios for  Algorithm \ref{alg_1_full}, leading us to recommend Algorithm \ref{alg_2_full} for broader applicability.

Finally, we address the nearest graph Laplacian problem for undirected simple graphs.  
Consider Problem \ref{prob_1} with the additional constraint $L=L^\top$.
The problem can be reformulated as Problem \ref{prob_2} with the additional constraint $M_{\mathrm{sym}}x ={\bm 0}_m$. This is achieved by replacing each undirected edge $\{i,j\}$ with two bidirectional directed edges $\{i,j\}$ and $\{j,i\}$, and imposing the constraint $w_{ij}=w_{ji}$ on the weights of every pair-edge.
Here, $m':=2m=2|E|$ is the number of edges after the doubling. The matrix $M_{\mathrm{sym}}\in \mathbb{R}^{m\times m'}$ is defined as follows: For the $k$-th undirected edge in $E$ being $\{i,j\}$ $(i<j)$, with $L_{ij}$ and $L_{ji}$ corresponding to $x_u$ and $x_v$, we define $(M_\mathrm{sym})_{ku}:= 1$, $(M_\mathrm{sym})_{kv}:= -1$, and all other entries as $0$.
Although the matrix $M_{\mathrm{sym}}$ plays the role of symmetrizing the edge weight, the constraint $M_{\mathrm{sym}}x = \bm{0}_m$ prevents the problem from being divided into row-wise simple and smaller problems.
Similar difficulties arise in the case of Problem \ref{prob_1_sl} with the additional constraint $L = L^\top$.
Therefore, developing efficient algorithms for the nearest graph Laplacian problem for undirected graphs remains a challenge for future work.

\bibliographystyle{siamplain}
\bibliography{references}

\appendix
\section{Worst-Case Instance of Algorithm 1} \label{ape_A}
The worst case of Algorithm \ref{alg_1} is when the number of free variables decreases one by one in each loop.
Without loss of generality, we assume that index $k$ was added to the active set in loop $d-k+1~(k=1,\dots,d)$, that is,
\begin{align*}
|F_1|&=\{1,\dots,d\},\,
\ldots,\,
|F_{d-k+1}|=\{1,\dots, k\},\\
|F_{d-k+2}|&=\{1,\dots,k-1\},\,
\ldots,\,
|F_d| =\{1\},\,
|F_{d+1}|=\emptyset.
\end{align*}

\begin{proposition}$\{b_i\}_{i=1}^d$ is the worst case of Algorithm \ref{alg_1} if and only if, 
\begin{align}
b_1&<0,\label{app0}\\
b_k&\leq (k+1)b_{k-1}-S_{k-1}\quad(k=2,\dots,d),
\label{app-2}
\end{align}
where $S_k$ is the cumulative sum $S_k:=\sum_{i=1}^kb_i$.
\end{proposition}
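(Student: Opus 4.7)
The plan is to analyze Algorithm \ref{alg_1} iteration by iteration under the worst-case labeling assumption. At iteration $\ell = d - k + 1$ the free set is $F_\ell = \{1,\dots,k\}$, and by Lemma \ref{lem_Qd_spd} the computed vector satisfies
\[
y^{(\ell)}_i \;=\; -\frac{b_i}{2} + \frac{S_k}{2(k+1)}, \qquad i = 1,\dots,k.
\]
Exactly one index (namely $k$) moves to the active set at this step if and only if $y^{(\ell)}_k > 0$ and $y^{(\ell)}_i \leq 0$ for all $i < k$; these rearrange to $b_k < S_k/(k+1)$ and $b_i \geq S_k/(k+1)$ for all $i < k$ respectively. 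This is the concrete characterization I would use throughout.

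For the necessity direction, I would simply read off the required inequalities from each iteration. Taking $i = k - 1$ yields $b_{k-1} \geq (S_{k-1} + b_k)/(k+1)$, which is a restatement of (\ref{app-2}) for each $k \in \{2,\dots,d\}$. For the last iteration $\ell = d$, where $F_d = \{1\}$, the condition $y^{(d)}_1 > 0$ reduces to $b_1 < 0$, giving (\ref{app0}).

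For the sufficiency direction, I would prove by induction on $k$ that (\ref{app0}) together with (\ref{app-2}) force both the strict monotonicity $b_1 > b_2 > \dots > b_d$ and the strict inequality $k b_k < S_{k-1}$ for every $k$. The base case $k=1$ is (\ref{app0}). For the inductive step, from $(k-1) b_{k-1} < S_{k-2}$ one adds $b_{k-1}$ to both sides to get $k b_{k-1} < S_{k-1}$; combining this with (\ref{app-2}) rewritten as $b_k - b_{k-1} \leq k b_{k-1} - S_{k-1}$ immediately yields $b_k < b_{k-1}$, while multiplying (\ref{app-2}) by $k$ and reusing $k b_{k-1} < S_{k-1}$ gives $k b_k \leq k(k+1) b_{k-1} - k S_{k-1} < (k+1) S_{k-1} - k S_{k-1} = S_{k-1}$. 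The strict inequality $k b_k < S_{k-1}$ is algebraically equivalent to $b_k < S_k/(k+1)$, and monotonicity together with (\ref{app-2}) gives $b_i \geq b_{k-1} \geq S_k/(k+1)$ for all $i < k$, verifying the worst-case behavior in iteration $d - k + 1$.

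The main obstacle I anticipate is keeping the two coupled induction hypotheses synchronized: neither monotonicity nor the strict inequality $k b_k < S_{k-1}$ closes on its own, but together they propagate cleanly once one spots the simple identity $k b_{k-1} = (k-1) b_{k-1} + b_{k-1}$ that advances the hypothesis from step $k-1$ to step $k$.
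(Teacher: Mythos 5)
Your proposal is correct and follows essentially the same route as the paper: both directions reduce to the explicit formula $\left(-Q_k^{-1}b_{[k]}\right)_j=\tfrac{1}{2}\left(-b_j+\tfrac{S_k}{1+k}\right)$, necessity reads off the sign conditions at each loop (using $j=k-1$), and sufficiency is an induction establishing strict monotonicity of the $b_k$ together with $b_k<S_k/(k+1)$. Your only departure is organizational — you carry $kb_k<S_{k-1}$ as a coupled induction hypothesis rather than deriving it afterwards from $S_{k-1}>(k-1)b_{k-1}$ and $b_k\leq 2b_{k-1}$ as the paper does — and this is, if anything, marginally cleaner.
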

\begin{proof}
First, we assume that $\{b_i\}_{i=1}^d$ is the worst case of Algorithm \ref{alg_1}. Since index $1$ was added to the active set in loop $d$, we have
$-(Q_{|F_d|})^{-1}b_1=-\frac{1}{4}b_1>0$,
and thus (\ref{app0}) is derived. In loop $d-k+1$ ($k=2,\dots,d$), since index $k$ is added to the active set and indices $1,\dots,k-1$ are not, we have
\begin{align}
\left(-Q_{|F_{d-k+1}|}^{-1}b_{F_{d-k+1}}\right)_k=\left(-Q_{k}^{-1}b_{[k]}\right)_k&>0,\label{app1}\\
\left(-Q_{|F_{d-k+1}|}^{-1}b_{F_{d-k+1}}\right)_j=\left(-Q_{k}^{-1}b_{[k]}\right)_j &\leq 0 \quad (j=1,\dots,k-1).\label{app2}
\end{align}
From Lemma \ref{lem_Qd_spd}, we have 
\begin{align}
\left(-Q_{k}^{-1}b_{[k]}\right)_j&=\left(-\frac{1}{2}\left(\mathrm{I}_{k}-\frac{1}{1+k}\mathrm{J}_{k}\right)b_{[k]}\right)_j=\frac{1}{2}\left(-b_j+\frac{S_k}{1+k}\right).\label{app3}
\end{align}
Using (\ref{app3}), inequalities (\ref{app1}) and (\ref{app2}) are equivalent to, for any $k=2,\dots,d$,
\begin{align}
b_k&<\frac{S_k}{1+k},\label{app4}\\
b_j&\geq \frac{S_k}{1+k}\quad (j=1,\dots,k-1).\label{app5}
\end{align}
From (\ref{app5}) with $j=k-1$, we have
\begin{equation}
b_{k-1}\geq \frac{S_k}{1+k}\quad(k=2,\dots,d).\label{app6}
\end{equation}
Using (\ref{app6}) and $S_k=S_{k-1}+b_k$, (\ref{app-2}) is derived and the necessary condition is proved.\par
Next, we prove that any $\{b_i\}_{i=1}^d$ satisfying (\ref{app0}) and (\ref{app-2}) is a worst case. To this end, proving (\ref{app4}) and (\ref{app5}) is sufficient. From (\ref{app0}) and (\ref{app-2}), we have
\begin{align}
b_2\leq 3b_1-b_1=2b_1<b_1<0.\label{app7}
\end{align}
Now, if $0>b_1>\dots>b_{k-1}$ holds for some $k\geq 3$, then
\begin{align}
b_k\leq (k+1)b_{k-1}-S_{k-1}<(k+1)b_{k-1}-(k-1)b_{k-1}=2b_{k-1}<b_{k-1}.\label{app8}
\end{align}
Combining (\ref{app7}) and (\ref{app8}), we have
\begin{equation}
0>b_1>b_2>\dots>b_d, \label{app9}
\end{equation}
and 
\begin{equation}
b_k\leq 2b_{k-1}\leq\dots\leq 2^{k-1}b_1. \label{app10}
\end{equation}
From (\ref{app-2}) and $S_k=S_{k-1}+b_k$, we can derive (\ref{app6}), and (\ref{app5}) follows from (\ref{app9}). From $S_{k-1}>(k-1)b_{k-1}$ and $2b_{k-1}\geq b_{k}$, we have
$S_{k-1}>(k-1)b_{k-1}\geq \frac{k-1}{2}b_k>kb_k$,
where the last inequality is by $\frac{k-1}{2}<k~(k\geq 2)$ and $b_{k}<0$. Thus, (\ref{app4}) is derived.
\end{proof}

For the one block problem of row $i$ (Problem \ref{prob_3}), vector $b^{(i)}\in \mathbb{R}^{d_i}$ is generated by
$b^{(i)} = \begin{bmatrix}2A_{ii}-2A_{i\mathcal{N}(i)_1}&\dots&2A_{ii}-2A_{i\mathcal{N}(i)_{d_i}}\end{bmatrix}^\top$.
Thus, an inefficient instance for Algorithm \ref{alg_1_full} in the form of matrix $A\in \mathbb{R}^{n\times n}$ can be generated by
\begin{align}
A_{ij} = 
\begin{cases}
-\frac{1}{2}b_k&(j=\mathcal{N}(i)_k)\\
0 & (\mathrm{otherwise}).
\end{cases}\label{Aworst}
\end{align}
\end{document}